\newtheorem{theorem}{Theorem}[section]
\newtheorem{prop}[theorem]{Proposition}
\newtheorem*{Theorem1'}{Theorem 1'}
\theoremstyle{definition}
\theoremstyle{remark}
\newtheorem{remark}[theorem]{Remark}
\numberwithin{equation}{section}
\newcommand \blue{\textcolor{blue}}
\newcommand \Z{{\mathbb Z}}
\newcommand \C{{\mathbb C}}
\newcommand \Hom{{\mathrm {Hom}}}
\newcommand \gl{{\mathfrak {gl}}}
\newcommand \sel{{\mathfrak {sl}}}
\renewcommand \r{{\mathfrak {r}}}
\newcommand \g{{\mathfrak {g}}}
\newcommand \n{{\mathfrak {n}}}
\newcommand \h{{\mathfrak {h}}}
\newcommand \s{{\mathfrak {s}}}
\renewcommand \sl{{\mathfrak {sl}}}
\newcommand \z{{\mathfrak z}}
\begin{document}

\title[Uniserial representations of
conformal Galilei algebras]{
Classification of  finite dimensional uniserial representations of
conformal Galilei algebras}

\author{Leandro Cagliero}
\address{CIEM-CONICET, FAMAF-Universidad Nacional de C\'ordoba, Argentina.}
\email{cagliero@famaf.unc.edu.ar}

\author{Luis Guti\'errez Frez}
\address{Instituto de Ciencias F\'{\i}sicas y Matem\'aticas, Universidad Austral de Chile, Chile}
\email{ luis.gutierrez@uach.cl}

\author{Fernando Szechtman}
\address{Department of Mathematics and Statistics, Univeristy of Regina, Canada}
\email{fernando.szechtman@gmail.com}

\subjclass[2010]{17B10, 17B30, 20C35, 22E70, 16G10}



\keywords{$6j$-symbol, uniserial representation, Heisenberg Lie algebra}

\begin{abstract} With the aid of the $6j$-symbol, we classify all uniserial modules
of $\sl(2)\ltimes \h_{n}$, where $\h_{n}$ is the Heisenberg Lie algebra of dimension $2n+1$.
\end{abstract}

\maketitle

\section{Introduction}
\label{intro}

We fix throughout a field $\mathbb{F}$ of characteristic zero. All Lie algebras and representations considered in this paper are
assumed to be finite dimensional over $\mathbb{F}$, unless explicitly stated otherwise.

Given a Lie algebra $\g$ and a $\g$-module $V$, the \emph{socle series} of $V$, namely
\[
0=\mathrm{soc}^{0}(V)\subset\mathrm{soc}^{1}(V)\subset
\cdots\subset \mathrm{soc}^{m}(V)=V
\]
is inductively defined by
declaring $\mathrm{soc}^{i}(V)/\mathrm{soc}^{i-1}(V)$ to be the socle
of $V/\mathrm{soc}^{i-1}(V)$, that is, the sum of all irreducible
submodules of $V/\mathrm{soc}^{i-1}(V)$,  $1\leq i\leq m$.
By definition, $V$ is \emph{uniserial}  if
the \emph{socle factors} $\mathrm{soc}^{i}(V)/\mathrm{soc}^{i-1}(V)$ are
irreducible for all $1\leq i\leq m$.
In other words, $V$ is uniserial if its socle series is a composition
series, or equivalently if its submodules are totally ordered by inclusion.

Uniserial and serial modules or rings
are very important in the context of  associative algebras and there is
an extensive literature devoted to them.
For instance, the class of serial rings and algebras includes discrete
valuation rings, Nakayama algebras, triangular matrix rings over a skew field and
Artinian principal ideal rings (see \cite{EG,Pu}). In particular, every proper factor ring
of a Dedekind domain is serial. Also, serial algebras occur as the group algebras in
characteristic $p$ (see, for instance \cite{Sr}).
In \cite{BH-Z}, among other things, a
characterization of algebras of finite uniserial type is given.
In contrast, there are only few papers devoted to the study these concepts for Lie algebras.

This work is a new step in a project aiming to systematically investigate the
uniserial representations of Lie algebras.
Here, we  classify all uniserial $\g$-modules
for $\g=\sl(2)\ltimes \h_{n}$, where $\h_{n}$ is the Heisenberg Lie algebra of dimension $2n+1$ and $\sl(2)$ acts on $\h_{n}$
so that both the center $\z$ of $\h_{n}$ and $\h_{n}/\z$
are irreducible $\sl(2)$-modules.
More precisely, given an integer $a\geq 0$, let $V(a)$ be the irreducible $\sl(2)$-module with highest weight $a$.
Thus,
\[
 \h_n\simeq V(m)\oplus V(0),\quad m=2n-1,
\]
as $\sl(2)$-modules.
The Lie algebra $\g$ is a conformal Galilei algebra and it is an important object in mathematical physics.
Galilei algebras and their representations attract considerable attention (see \cite{AIK}, \cite{LMZ} and references therein).

Previously, we obtained a classification of all uniserial $\g$-modules when
$\g=\sl(2)\ltimes V(a)$, $a\geq 1$, over the complex numbers (see \cite{CS}), as well as when $\g$ is abelian, over a sufficiently large perfect field (see \cite{CS_Can}). In the first case the classification turns out to be  equivalent to determining all non-trivial
zeros of the Racah-Wigner $6j$-symbol within certain parameters, while in the second a sharpened version of the Primitive Element Theorem plays a central role, specially over finite fields.

\medskip

In this article we focus on $\g=\sl(2)\ltimes \h_{n}$.
Since every non-trivial ideal of $\g$ contains $\z$, it follows that any non-faithful
representation of $\g$ is obtained from a representation of $\sl(2)\ltimes V(m)$.
Therefore, the classification of all non-faithful uniserial $\g$-modules follows
from \cite{CS}, while the classification of all faithful uniserial $\g$-modules is
given by the following theorem, which is the main result of the paper.

\begin{theorem}\label{thm.main}
All faithful uniserial $\g$-modules have length 3. Moreover,
the socle factors of a faithful uniserial $\g$-module of length 3 must be one of the following:

\medskip

\noindent
\begin{tabular}{ll}
$m=1:$   & $V(a),V(a+1),V(a)$ or $V(a+1),V(a),V(a+1)$, with $a\ge0$. \\[1mm]
$m=3:$   &  $V(0),V(3),V(0)$ or $V(1),V(4),V(1)$ or $V(1),V(2),V(1)$ or $V(4),V(3),V(4)$.\\[1mm]
$m\geq 5:$   &  $V(0),V(m),V(0)$ or $V(1),V(m+1),V(1)$ or $V(1),V(m-1),V(1)$.

\end{tabular}

\medskip

\noindent
Furthermore, each of these sequences arises from one and only one
isomorphism class of uniserial $\g$-modules.
\end{theorem}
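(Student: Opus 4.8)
The plan is to analyze faithful uniserial $\g$-modules $V$ via their socle series and the restriction to $\sl(2)$. Since $\g = \sl(2)\ltimes\h_n$ with $\h_n$ acting nilpotently, the center $\z$ of $\h_n$ is spanned by a single element $z$ that acts faithfully (hence nonzero) because $V$ is faithful. The socle factors are irreducible $\sl(2)$-modules $V(a_i)$, and the action of $\h_n \simeq V(m)\oplus V(0)$ must shift between consecutive factors. I would begin by studying the action of a highest weight vector of $V(m)\subset\h_n$ as an $\sl(2)$-equivariant map between socle factors. By Clebsch--Gordan, the space $\Hom_{\sl(2)}(V(b), V(m)\otimes V(a))$ is at most one-dimensional and is nonzero precisely when $|a-m|\le b\le a+m$ with the parity condition $b\equiv a+m\pmod 2$. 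This severely restricts which pairs $(a_i,a_{i+1})$ can appear as adjacent socle factors.

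First I would establish the length bound. The key structural fact is that $z=[x,y]$ for dual vectors $x,y$ in $\h_n/\z$, and faithfulness forces $z$ to act as a nonzero nilpotent operator on $V$. Because $z\in[\g,\g]$ and acts by a scalar on each irreducible $\sl(2)$-constituent (being central in $\h_n$ and $\sl(2)$-invariant), and because $z$ must map $\mathrm{soc}^{i}$ into $\mathrm{soc}^{i-1}$ while being nonzero, I expect a commutator/weight argument to show $z$ shifts the socle filtration by exactly two steps, forcing length exactly $3$ and pinning the first and third socle factors to be isomorphic. This is where the bracket relation $[V(m),V(m)]\subseteq\z$ inside $\h_n$ does the real work: the composite of the two $\sl(2)$-maps $V(a_1)\to V(a_2)\to V(a_3)$ must reproduce the action of $z$, which is a nonzero scalar times the identity (up to the isomorphism $V(a_1)\cong V(a_3)$), and this nonvanishing is exactly the nonvanishing of a product of two Clebsch--Gordan coefficients, i.e.\ a $6j$-symbol.

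Next I would translate the existence question into representation-theoretic data. Writing the module structure relative to the $\sl(2)$-decomposition $V = V(a)\oplus V(b)\oplus V(a)$, the action of $\h_n$ is encoded by two $\sl(2)$-equivariant maps together with the constraint coming from the Heisenberg relation; the Jacobi identity and the requirement that $\h_n$ act nilpotently impose quadratic compatibility conditions. The condition that the module be uniserial (equivalently that the socle is the first factor and no extra irreducible splits off) becomes the condition that the relevant composite map is nonzero, which after expanding in a Clebsch--Gordan basis is a nonvanishing condition on a specific $6j$-symbol $\begin{Bmatrix} j_1 & j_2 & j_3 \\ j_4 & j_5 & j_6 \end{Bmatrix}$ with entries determined by $a$, $b$, and $m/2$. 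Determining exactly which triples $(a,b,a)$ give a nonzero such symbol, subject to the Clebsch--Gordan admissibility $|a-m|\le b\le a+m$, is the heart of the classification and is where I would invoke known zero/nonzero criteria for $6j$-symbols.

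The main obstacle I anticipate is twofold. The combinatorial step of deciding, for each $m$, precisely which weights $b$ yield a nonzero $6j$-symbol is delicate: generic admissible triples will work, but there are sporadic nontrivial zeros of the $6j$-symbol, and these are exactly what force the special behavior at $m=3$ (the appearance of the extra families $V(1),V(2),V(1)$ and $V(4),V(3),V(4)$ and the coincidences that merge cases). So the hard part is not the existence of modules for the listed sequences—once the $6j$-symbol is nonzero one builds the module directly—but proving that no other sequence occurs, which requires ruling out the borderline admissible cases via explicit nonvanishing (or vanishing) of the corresponding $6j$-symbols, and separately handling the low-rank anomalies $m=1,3$ by hand. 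Finally, for uniqueness up to isomorphism I would argue that the one-dimensionality of each relevant $\Hom$ space, combined with rescaling freedom in the defining maps, leaves no moduli, so each admissible sequence determines a single isomorphism class.
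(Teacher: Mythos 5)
Your general framework---bases adapted to the socle series, block upper triangular matrix representations, Clebsch--Gordan restrictions on adjacent factors, and $6j$-symbols governing the Heisenberg relation---is indeed the framework of the paper, but two of your key steps have genuine gaps. The first is the length bound. Your proposed weight/commutator argument (``$z$ shifts the socle filtration by exactly two steps, forcing length exactly $3$'') is a non sequitur: in a hypothetical length-$4$ module with socle factors $V(a),V(a+1),V(a),V(a+1)$, the central element $z$ would act through the $(1,3)$ and $(2,4)$ blocks, shifting by exactly two steps and connecting isomorphic constituents, in complete accordance with all the constraints you list. The paper eliminates length $4$ (Proposition \ref{no4}) by a case analysis on whether the length-$3$ submodule $W_1$ and quotient $W_2$ are faithful: when both are, it uses the uniqueness part of Theorem \ref{uno} to pin down the blocks $D(z)=(a+2)I_{a+1}$ and $E(z)=-(a+1)I_{a+2}$, and then contradicts the relation $[v_1,z]=0$ by computing that block $(1,4)$ of $[R(v_1),R(z)]$ equals $-(2a+3)\begin{pmatrix}0_{a+1} & I_{a+1}\end{pmatrix}\neq 0$; when $W_1$ or $W_2$ is not faithful, it invokes the classification of uniserial $\sl(2)\ltimes V(m)$-modules (Theorem \ref{thm.CS_Classification}), which is also what powers the induction in Theorem \ref{no5} killing all lengths $\geq 5$ (non-faithful sub and quotient force the socle factors into an arithmetic progression of step $\pm m$, whence $z$ acts trivially, contradicting faithfulness). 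Your proposal contains no substitute for any of these ingredients; in particular it never uses the prior classification of non-faithful uniserials, which is indispensable here because a sub or quotient of a faithful uniserial module need not be faithful.

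Second, your $6j$-symbol logic is partly inverted, and the hardest step is missing. Since the image $\mathcal J$ of $\Lambda^2 V(m)\to\Hom(V(a),V(a))$ equals $Z(\z)\cdot\mathbb{F}\simeq V(0)$, existence of a faithful module requires the $6j$-symbols attached to all \emph{higher} common constituents $V(r)$, $r>0$, of $\Lambda^2V(m)$ and $\Hom(V(a),V(a))$ to \emph{vanish} (or the triangle condition to fail); their nonvanishing is the obstruction, so ``once the $6j$-symbol is nonzero one builds the module directly'' is backwards. The generic families $a=0,1$ exist because $r=0$, i.e.\ there are no higher common constituents at all, and the exceptional module $V(4),V(3),V(4)$ at $m=3$ exists precisely because of a sporadic zero. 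Conversely, to exclude every other pair $(a,b)$ one must show that vanishing of the symbol at $r$ forces nonvanishing at $r-4$, so that $V(r-4)$ enters $\mathcal J$, whence $r=4$, $m=3$, and then $(a,b)=(4,3)$ via $F(3)=0$. No off-the-shelf ``known zero/nonzero criteria'' accomplish this: the paper proves a dedicated recurrence result (Proposition \ref{prop.6j_non-zero}, derived from the Biedenharn--Elliott identity), valid only under the symmetry $j_2=j_3$ and false without it, exactly for this purpose. Without that proposition, or an equivalent, your outline cannot complete the classification. Your uniqueness argument (one-dimensionality of the relevant $\Hom$ spaces plus block-scalar rescaling) is correct and agrees with the paper's.
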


\begin{remark}
It follows from this theorem that for a given $n>2$, $\sl(2)\ltimes\h_{n}$  has only 3
isomorphism classes of faithful uniserial representations (if $n = 2$, it has 4),
whereas it has infinitely many classes of non-faithful ones.
\end{remark}

Theorem \ref{thm.main} is a direct consequence of
Theorems \ref{uno}, \ref{dos} and \ref{no5} below.
Explicit realizations of these modules are given in \S\ref{sec.construction}.

Let us say a few words about Theorem \ref{thm.main}. Suppose $\g=\s\ltimes\n$, with $\s$ simple, $\n$ nilpotent, and
assume that $\n$ is generated as a Lie algebra by an $\s$-submodule $\n_0\subset\n$.
By general results of the theory, in order to obtain a faithful uniserial $\g$-module of length $\ell$
with socle factors $V_i$, $1\le i\le\ell$, the following is required:
\begin{enumerate}[(1)]
 \item for $1\le i\le\ell$, a matrix representation $R_i:\s\to\gl(d_i)$ corresponding to the $\s$-module $V_i$; at least one $R_i$ must be non-trivial.
 \item for $2\le i\le\ell$, a faithful matrix presentation  $X_i:\n_0\to M_{d_{i-1},d_i}(\mathbb{F})$ corresponding to
an $\s$-module homomorphism $\n_0\to \text{Hom}(V_i,V_{i-1})$;
\item for the linear map $R:\s\oplus\n_0\to\gl(\sum d_i)$, defined by
$$
R(s+u)=\left(
         \begin{matrix}
           R_1(s) & X_2(u) & 0 & \cdots & 0\\
           0 & R_2(s) & X_3(u) & \cdots & 0 \\
           \vdots &  & \ddots   &   & \vdots\\
           0 & 0 &  \cdots &   R_{\ell-1}(s) & X_{\ell}(u)  \\
           0 & 0 & \cdots  & 0& R_\ell(s)\\
         \end{matrix}
       \right),\quad s\in\s,\; u\in \n_0,
$$
the matrix Lie algebra $\tilde \n$ generated by $R(\n_0)$ must be isomorphic to $\n$ (note that $\tilde \n$ consists of block upper triangular matrices).
\end{enumerate}

\medskip

When $\g=\sl(2)\ltimes \n$, the following fact describes what happens with
the second superdiagonal of $\tilde \n$, namely $[R(\n_0),R(\n_0)]$, which should be isomorphic to $[\n_0,\n_0]\subset\n$
as $\s$-modules.

\medskip
\noindent
\textbf{Fact:} Generically speaking, it turns out that the block $(i-2,i)$, $3\le i\le\ell$,
of $\tilde \n$ consists of \textbf{all} irreducible $\s$-submodules of $\Lambda^2\n_0$ that also appear in $\text{Hom}(V_i,V_{i-2})$.
Nevertheless, in some curious cases, some of these constituents do not appear in $[R(\n_0),R(\n_0)]$,
as the following example shows.

\medskip
\noindent
\textbf{Example:} Let $\g=\sl(2)\ltimes\h_2$, so that $\n_0=V(3)$ and $[\n_0,\n_0]=V(0)$.
Assume that $\{v_0,v_1,v_2,v_3\}$ is \blue{a} standard basis of $\n_0$, as defined in \S\ref{wdos}.
Proceeding as above, we obtain a faithful representation of $\g$ with socle factors $V(4)$, $V(3)$, $V(4)$ via:
\[
 R\big(\textstyle\sum_{i=0}^3a_iv_i\big)=\left(
         \begin{array}{c|c|c}
 \qquad 0\qquad  & \begin{smallmatrix}
               -6a_1&6a_0&0&0\\ -3a_2&0&3a_0&0\\ -a_3&-3a_2&3a_1&a_0\\ 0&-3a_3&0&3a_1\\ 0&0&-6a_3&6a_2
               \end{smallmatrix}
                                 &  \\[5mm] \hline
            & 0 &\begin{smallmatrix}
                    \\[1mm] 3a_2&-6a_1&3a_0&0&0\\ a_3&0&-3a_1&2a_0&0\\ 0&2a_3&-3a_2&0&a_0\\ 0&0&3a_3&-6a_2&3a_1
                  \end{smallmatrix} \\[5mm] \hline
                 & \begin{smallmatrix}\\[6mm]  \end{smallmatrix}    & 0
         \end{array}
       \right).
\]
It turns out that, by ``some miracle'', $[R(\n_0),R(\n_0)]$ is just $V(0)$, as opposed to the expected result
of $V(0)\oplus V(4)$ (note that $V(4)$ is indeed a constituent of both $\text{Hom}(V(4),V(4))$
and $\Lambda^2\n_0$. This ``miracle'', which is due to the exceptional zero
$\left\{\begin{matrix}
\frac{4}{2} \; \frac{3}{2} \; \frac{3}{2}  \\[.6mm]
\frac{3}{2} \; \frac{4}{2} \; \frac{3}{2}
\end{matrix}\right\}=0$
of the $6j$-symbol, produces an unexpected uniserial $\g$-module.

\medskip

In general, if $\g=\sl(2)\ltimes \n$ then the exceptional
zeros of the $6j$-symbol control when the matrix Lie algebra generated by $R(\n_0)$ is isomorphic to $\n$.
Item (3) above might be very difficult to determine for other simple Lie algebras $\s$.

\section{Preliminaries}

\subsection{Matrix recognition of uniserial representations}\label{sec.matrix_recognition}
In this subsection we recall from \cite{CS}
some basic facts about uniserial representations
of a Lie algebra $\g$ with solvable radical $\r$ and
fixed Levi decomposition $\g=\s\ltimes \r$.

Given a representation $T:\g\to\gl(V)$ and a
basis $B$ of $V$ we let $M_B:\g\to\gl(d)$, $d=\dim(V)$, stand for
the corresponding matrix representation.

Since  $\s$ is semisimple, it follows that there exist
irreducible $\s$-submodules $W_i$, $1\le i\le n$, such that
\begin{equation}\label{eq.comp_series}
0\subset W_1\subset W_1\oplus W_2\subset W_1\oplus
W_2\oplus W_3\subset \cdots\subset W_1\oplus\cdots \oplus
W_n=V
\end{equation}
is the composition series of $V$.
Let $B=B_1\cup\cdots\cup B_n$ be a basis of $V$, where each $B_i$ is a basis of $W_i$.
We say that $B$ is \emph{adapted} to the composition series \eqref{eq.comp_series}.
If $B$ is adapted to a composition series, then $M_B(s)$ is block diagonal for all $s\in\s$.

It is well-known \cite[Ch. I, \S 5, Th. 1]{Bo} that $[\g,\r]$ annihilates every irreducible $\g$-module. Therefore, if $B$ is adapted to a composition series, then
$M_B(r)$ is strictly block upper triangular for all $r\in [\g,\r]$.

The following result, proven in \cite[Theorem 2.4]{CS} over $\C$, remains valid over $\mathbb{F}$.

\begin{theorem}\label{thm.CS_2.4}
\label{fed} The $\g$-module $V$ is uniserial if and only if, given
any basis $B$ adapted to a composition series,
none of the blocks in the first
superdiagonal of $M_B(\r)$ is identically 0.
Moreover, if $[\g,\r]=\r$ and there exists a basis $B$ adapted to a composition series
such that none of the blocks in the first
superdiagonal of $M_B(\r)$ is identically 0, then $V$ is
uniserial.
\end{theorem}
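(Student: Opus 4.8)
The plan is to establish separately the three implications contained in the statement: the forward direction of the equivalence, its converse quantified over all adapted bases, and the sharper single-basis criterion valid when $[\g,\r]=\r$. Throughout I will use the block description recalled above: in a basis $B$ adapted to a composition series $0\subset V_1\subset\cdots\subset V_n=V$ (with $V_j=W_1\oplus\cdots\oplus W_j$), the matrix $M_B(s)$ is block diagonal for $s\in\s$, $M_B(r)$ is block upper triangular for $r\in\r$ and strictly so for $r\in[\g,\r]$, and each diagonal block $W_i$ affords an irreducible $\s$-module. For the forward direction I will suppose $V$ is uniserial and assume, toward a contradiction, that some first superdiagonal block $M_B(r)_{i,i+1}$ vanishes for every $r\in\r$. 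Since the corresponding block of $M_B(s)$ vanishes as well, $\g$ acts block diagonally on the subquotient $V_{i+1}/V_{i-1}$, so $V_{i+1}/V_{i-1}\cong W_i\oplus W_{i+1}$ as $\g$-modules. This is a decomposable subquotient of length $2$; since subquotients of uniserial modules are uniserial and a uniserial module of length $2$ is indecomposable, this is a contradiction. As the splitting of $V_{i+1}/V_{i-1}$ is intrinsic, the conclusion holds for every adapted basis.

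For the converse I will argue by contraposition, producing from a non-uniserial $V$ an adapted basis with a vanishing first superdiagonal block. Non-uniseriality supplies incomparable submodules $A$ and $B$; I will set $C=A\cap B$ and pick irreducible $\g$-submodules $\bar U_1\subseteq A/C$ and $\bar U_2\subseteq B/C$ with preimages $U_1,U_2$ in $V$. Then $U_1\cap U_2=C$ and $(U_1+U_2)/C=\bar U_1\oplus\bar U_2$ is semisimple of length $2$. I will then build a $\g$-composition series through $C\subset U_1\subset U_1+U_2$ and choose the $\s$-complements so that the two relevant basis blocks map onto the genuine submodules $\bar U_1,\bar U_2$ of $(U_1+U_2)/C$; the off-diagonal block between them, a first superdiagonal block of $M_B(\r)$, is then identically zero. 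The only delicate point is choosing an $\s$-complement to $V_i$ in $V_{i+1}$ whose image is exactly $\bar U_2$, which I will get from complete reducibility of $\s$-modules together with $U_2\cap V_i=C$ and $U_2+V_i=V_{i+1}$.

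The single-basis criterion under $[\g,\r]=\r$ is the core of the theorem, and I will prove it by induction on the length, the decisive step being the computation of the socle. When $[\g,\r]=\r$ every $M_B(r)$ is strictly block upper triangular, and I will first verify that $\mathrm{soc}(V)=V^{\r}:=\{v\in V:\r v=0\}$: irreducible submodules are killed by $[\g,\r]=\r$, while $V^\r$ is a $\g$-submodule on which $\r$ acts trivially and is therefore semisimple. Next I will show $V^\r=W_1$. The inclusion $W_1\subseteq V^\r$ is immediate from strict upper triangularity; conversely, given $v=v_1+\cdots+v_k\in V^\r$ with $v_k\neq 0$ and $k\ge 2$, comparing $W_{k-1}$-components in $M_B(r)v=0$ forces $M_B(r)_{k-1,k}\,v_k=0$ for all $r\in\r$. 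The key lemma, read off from $[M_B(s),M_B(r)]=M_B([s,r])$ on the $(k-1,k)$ block, is that $r\mapsto M_B(r)_{k-1,k}$ is an $\s$-module homomorphism $\r\to\Hom(W_k,W_{k-1})$; hence $\bigcap_{r}\ker M_B(r)_{k-1,k}$ is an $\s$-submodule of $W_k$, and because this block is not identically zero while $W_k$ is irreducible, the common kernel is $0$, forcing $v_k=0$. Thus $\mathrm{soc}(V)=W_1$ is irreducible. Passing to $V/W_1$, whose induced adapted basis retains the non-vanishing of the remaining first superdiagonal blocks, the inductive hypothesis makes $V/W_1$ uniserial; and a module whose socle is irreducible and whose quotient by the socle is uniserial is itself uniserial, which closes the induction (the length $1$ case being trivial).

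I expect the main obstacle to be exactly this key lemma: upgrading ``the block is not identically zero'' to ``the block has trivial common kernel''. This is the step that fuses the $\s$-equivariance of the off-diagonal blocks with the irreducibility of the $W_i$, and it is what makes the socle computation work. It also explains the asymmetry of the statement: without the hypothesis $[\g,\r]=\r$ the socle need not equal $V^\r$, so this clean argument breaks down, which is why the single-basis test requires that hypothesis while the general equivalence must range over all adapted bases.
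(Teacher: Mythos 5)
Your proof is correct in all three parts, and it is essentially the argument behind this result: the paper itself gives no proof, citing \cite[Theorem 2.4]{CS}, and the cited proof runs along exactly the lines you propose. Specifically, your three steps --- splitting the length-2 subquotient $V_{i+1}/V_{i-1}$ for the forward direction, refining a composition series through $(U_1+U_2)/C$ with an $\s$-complement chosen inside $U_2$ for the converse, and, under $[\g,\r]=\r$, computing $\mathrm{soc}(V)=V^{\r}=W_1$ via the key observation that each superdiagonal block $r\mapsto M_B(r)_{k-1,k}$ is an $\s$-homomorphism $\r\to\Hom(W_k,W_{k-1})$ whose nonvanishing forces trivial common kernel --- reproduce the mechanism of the cited proof, including the $\s$-equivariance of the blocks that underlies the whole method of \cite{CS}.
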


\subsection{Uniserial representations of \texorpdfstring{$\sl(2)\ltimes V(m)$}{sl(2)xV(m)}}\label{wdos}
Recall that $V(a)$ is the irreducible $\sl(2)$-module with highest weight $a\ge0$.
We fix a basis $\{v_0,\dots,v_a\}$ of $V(a)$ relative to which $e,h,f\in\sl(2)$ act as follows:
$$
hv_i=(a-2i)v_i,
$$
$$
ev_i=(a-(i-1))v_{i-1},
$$
$$
fv_i=(i+1)v_{i+1},
$$
where $0\leq i\leq a$ and $v_{-1}=0=v_{a+1}$.
Such  basis of $V(a)$ will be called \emph{standard}. We write $R_a:\sl(2)\to\gl(a+1)$ for
the corresponding matrix representation.

The following theorem, proved in \cite{CS}, provides a classification, up to isomorphism,
of all the uniserial representations of the Lie algebra $\sl(2)\ltimes V(m)$, $m\ge1$,
when the underlying field is $\C$. Nevertheless, the classification remain true over $\mathbb{F}$.

\begin{theorem}\label{thm.CS_Classification}
Up to a reversing of the order,
the following are the only possible sequences of socle factors of
uniserial representations of $\sl(2)\ltimes V(m)$:

\noindent
\begin{tabular}{ll}
\\[-2mm]
Length 1. & $V(a)$.  \\[2mm]
Length 2. & $V(a),V(b)$, where $a+b\equiv m\mod 2$ and $0\le b-a\leq m\leq a+b$. \\[2mm]
Length 3. & $V(a),V(a+m),V(a+2m)$; or \\[1mm]
      & $V(0),V(m),V(c)$, where $c\equiv 2m \mod 4$ and $c\leq 2m$. \\[2mm]
Length 4. & $V(a),V(a+m),V(a+2m),V(a+3m)$; or \\[1mm]
      & $V(0),V(m),V(m),V(0)$, where $m\equiv 0\mod 4$. \\[2mm]
Length $\geq 5$. &  $V(a),V(a+m),\dots,V(a+s m)$, where $s\geq 4$. \\[2mm]
\end{tabular}

\noindent Moreover, each of these sequences arises from only one
isomorphism class of uniserial $\g$-modules, except for the
sequence $V(0),V(m),V(m),V(0)$, $m\equiv 0\mod 4$. The isomorphism
classes of uniserial $\g$-modules associated to this sequence are
parametrized by $\mathbb{F}$.
\end{theorem}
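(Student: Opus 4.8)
The plan is to translate the module structure into $\sl(2)$-equivariant linear algebra and then read off the constraints coming from uniseriality (via Theorem \ref{thm.CS_2.4}) and from the commutativity of the abelian radical $\r=V(m)$. First I would record that, since $\r=[\g,\r]$ annihilates every irreducible $\g$-module, the socle factors of any $\g$-module are irreducible $\sl(2)$-modules $V(a_1),\dots,V(a_\ell)$. Fixing a basis $B$ adapted to a composition series, $M_B(s)$ is block diagonal for $s\in\sl(2)$ and $X:=M_B|_{\r}$ is strictly block upper triangular; equivariance makes $X$ an $\sl(2)$-homomorphism $V(m)\to\gl(V)$ whose $(i,j)$-block $X_{ij}$ ($i<j$) is an $\sl(2)$-homomorphism $V(m)\to\Hom(V(a_j),V(a_i))$, while abelianness of $\r$ is the single relation $[X(u),X(v)]=0$. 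Since $[\g,\r]=\r$, Theorem \ref{thm.CS_2.4} applies in both directions: $V$ is uniserial if and only if every first-superdiagonal block $X_{i-1,i}$ is not identically zero. Because reversing the socle series corresponds to passing to the contragredient module, I may treat each sequence up to reversal.

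Next I would analyze the first superdiagonal, which settles lengths $1$ and $2$. By Clebsch--Gordan the multiplicity of $V(m)$ in $V(a_{i-1})\otimes V(a_i)\cong\Hom(V(a_i),V(a_{i-1}))$ is at most $1$, so $\Hom_{\sl(2)}(V(m),\Hom(V(a_i),V(a_{i-1})))$ is at most one-dimensional; write $X_{i-1,i}=c_i\psi_i$ for a fixed generator $\psi_i$ (when one exists) and a scalar $c_i$. Such a generator exists precisely when $|a_{i-1}-a_i|\le m\le a_{i-1}+a_i$ and $a_{i-1}+a_i\equiv m\pmod 2$. Uniseriality forces every $c_i\ne0$, which immediately yields the length-$1$ sequence $V(a)$ and the length-$2$ sequences $V(a),V(b)$ with $0\le b-a\le m\le a+b$ and the stated parity.

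The core of the argument is the second superdiagonal, where the abelian relation becomes a statement about $6j$-symbols. A direct block computation gives
\[
[X(u),X(v)]_{i-2,i}=X_{i-2,i-1}(u)\,X_{i-1,i}(v)-X_{i-2,i-1}(v)\,X_{i-1,i}(u),
\]
so commutativity forces the induced $\sl(2)$-homomorphism $\Lambda^2 V(m)\to\Hom(V(a_i),V(a_{i-2}))$ to vanish (the surviving factor $c_{i-1}c_i$ is nonzero). Recoupling this composition into the total-spin basis expresses its component on each common irreducible constituent $V(k)$ of $\Lambda^2 V(m)$ and of $\Hom(V(a_i),V(a_{i-2}))$ as a scalar multiple of a Racah--Wigner $6j$-symbol. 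Two structural facts then organize everything: $\Lambda^2 V(m)$ has constituents $V(2m-2),V(2m-6),\dots$ only; and when the triple is arithmetic, $a_{i-1}=a_{i-2}+m$ and $a_i=a_{i-1}+m$, the lowest constituent of $\Hom(V(a_i),V(a_{i-2}))$ is $V(2m)$, which exceeds the highest constituent $V(2m-2)$ of $\Lambda^2 V(m)$, so the two share no constituent and the constraint is automatic. This is what makes the arithmetic towers $V(a),V(a+m),\dots,V(a+sm)$ work at every length.

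Finally I would assemble the remaining lengths and settle uniqueness, and this is where I expect the real work to be. For length $3$, besides the tower, the only way to kill the obstruction with $a_1=0$ is to make the target $\Hom(V(c),V(0))\cong V(c)$ disjoint from $\Lambda^2 V(m)$, i.e.\ $c\equiv 2m\pmod 4$ (together with $c\le 2m$ from the first superdiagonal), giving exactly $V(0),V(m),V(c)$; every other candidate triple would require a genuine zero of the corresponding $6j$-symbol, and the main analytic input is that no such exceptional zeros occur in the relevant parameter window. For length $4$ the palindrome $V(0),V(m),V(m),V(0)$ survives precisely when $m\equiv 0\pmod 4$ (so that $V(m)\not\subseteq\Lambda^2 V(m)$); here the second-superdiagonal blocks $V(m)\to\Hom(V(m),V(0))\cong V(m)$ are genuinely free, and solving $[X(u),X(v)]_{1,4}=0$ modulo the diagonal rescaling automorphisms leaves a single $\mathbb{F}$-modulus, the source of the $\mathbb{F}$-parametrization. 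For length $\ge5$ one shows that two overlapping non-arithmetic triples cannot coexist: the higher commutator identities $[X,X]_{i-3,i}=0$ force second-superdiagonal blocks that cannot be matched, so only the tower remains. Uniqueness for every non-exceptional sequence then follows by using the block-diagonal $\sl(2)$-automorphisms to normalize each $c_i$ to $1$ and checking that the higher blocks are thereby determined. The hard part throughout is the precise bookkeeping of the $6j$-symbols: separating the parity-forced (automatic) zeros, which produce the listed special families, from the potential exceptional zeros, and proving the latter are absent in range so that no further sequences, and no longer exceptional modules, can occur.
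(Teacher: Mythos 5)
You should first note what the paper actually does with this statement: it does not prove it, but imports it verbatim from \cite{CS} (remarking only that the classification persists over $\mathbb{F}$), so the relevant comparison is with the proof in \cite{CS}. Your skeleton reproduces that proof's architecture faithfully: adapted bases and the two-way criterion of Theorem~\ref{thm.CS_2.4} (legitimate here since $[\g,\r]=\r$ for $\r=V(m)$, $m\ge1$); Clebsch--Gordan multiplicity one on the first superdiagonal, which correctly settles lengths $1$ and $2$; the block identity $[X(u),X(v)]_{i-2,i}=X_{i-2,i-1}(u)X_{i-1,i}(v)-X_{i-2,i-1}(v)X_{i-1,i}(u)$, whose vanishing on each common constituent of $\Lambda^2V(m)$ and $\Hom(V(a_i),V(a_{i-2}))$ is measured by a $6j$-symbol (this is exactly \cite[Corollary 9.2]{CS}, the tool this paper itself invokes); and the disjointness-of-constituents mechanism that makes the arithmetic towers and the families $V(0),V(m),V(c)$, $c\equiv 2m\bmod 4$, and $V(0),V(m),V(m),V(0)$, $m\equiv0\bmod 4$, unobstructed. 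All of that is sound and is the same route, not a different one.

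The genuine gap is that you set aside precisely the load-bearing step: proving that, outside the listed families, the relevant $6j$-symbols do \emph{not} vanish. You call this ``bookkeeping,'' but it is the mathematical core of \cite{CS} and it is far from automatic, because $6j$-symbols with all four triangle conditions satisfied do admit sporadic non-trivial zeros --- the present paper is built around one, $\left\{\begin{smallmatrix} 2 & \frac{3}{2} & \frac{3}{2}\\ \frac{3}{2} & 2 & \frac{3}{2}\end{smallmatrix}\right\}=0$, and handling such zeros requires the Biedenharn--Elliott three-term recurrence, the same machinery as Proposition~\ref{prop.6j_non-zero} here. Without this input you cannot exclude a single non-arithmetic triple with $a_1>0$, hence cannot close length $3$, and the exclusions at length $\ge4$ and the uniqueness assertions collapse with it. Two secondary weaknesses of the same kind: for length $\ge5$ your appeal to ``higher commutator identities $[X,X]_{i-3,i}=0$ forcing unmatched blocks'' is not an argument --- the clean route is induction using that sub- and quotient modules of uniserials are uniserial, overlapping the already-classified length-$3$ and length-$4$ lists (a palindrome cannot extend, so only towers survive); and you never exhibit the modules whose existence the theorem asserts. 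For the towers this is easy but should be said: $\Hom_{\sl(2)}(V(m),\Hom(V(a+jm),V(a+im)))=0$ for $j-i\ge2$ since $(j-i)m>m$, so all higher blocks vanish identically, existence follows by taking the first superdiagonal alone, and uniqueness by rescaling; for the palindrome the ``single $\mathbb{F}$-modulus'' is exactly the computation the proof must contain, and you only assert it.
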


Explicit realizations of these modules can be found in \cite{CS}.

\subsection{The Lie algebra \texorpdfstring{$\g=\sl(2)\ltimes \h_{n}$}{}}\label{lieg}
We fix an integer $n\ge1$.
Let $\h_{n}$ be the  Heisenberg Lie algebra of dimension $2n+1$
and set $m=2n-1$.
Of all Lie algebras of a given dimension (that, a fortiori, must be odd),
$\h_{n}$ is characterized by the fact that its center, say $\z=\C z$,
is 1-dimensional and $[\h_n,\h_n]=\z$.

We know that $\sl(2)$ acts via derivations on $\h_{n}$ in such a way that
\[
\h_{n}\simeq V(m)\oplus \z
\]
as $\sl(2)$-modules, where $\z\simeq V(0)$. There is a unique $\sl(2)$-invariant skew-symmetric bilinear form on $V(m)$, up to scaling.
Thus, the bracket on $V(m)$ is uniquely determined, up to scaling. We fix $[v_0,v_m]=z$ and obtain
\[
[v_i,v_{m-i}]=(-1)^i\binom{m}{i}z,\quad 0\leq i\leq m.
\]
Let $\g=\sl(2)\ltimes \h_{n}$.

\subsection{The \texorpdfstring{$6j$}{}-symbol}

Given three half-integers, $j_1$, $j_2$ and $j_3$,
we say that they satisfy the triangle condition if $j_1+j_2+j_3\in\Z$ and
there is a triangle with sides $j_1$, $j_2$ and $j_3$; that is
\[
|j_1-j_2|\le j_3\le j_1+j_2.
\]
In particular,  $j_1$, $j_2$ and $j_3$ must be non-negative.
If either $|j_1-j_2|=j_3$ or $j_3=j_1+j_2$ we say that the triple
 $(j_1,j_2, j_3)$ is a degenerate triangle.
The Clebsch-Gordan formula states that $\dim_{\mathbb{F}}\Hom_{\sl(2)}(V(k), V(a)\otimes V(b))=1$
if $(\frac{a}{2},\frac{b}{2},\frac{k}{2})$ satisfies the triangle condition and 0 otherwise.

\medskip

We recall from \cite[Chapter 9]{VMK} some of the main properties of the $6j$-symbol.

\begin{enumerate}[(1)]
 \item\label{it.2} Given six half-integers  $j_1$, $j_2$, $j_3$, $j_4$, $j_5$ and $j_6$ the $6j$-symbol
$\left\{
\begin{matrix}
j_1 \;  j_2 \;  j_3 \\
j_4 \;  j_5 \;  j_6
\end{matrix}
\right\}$
is a real number that is, by definition, zero if
one of following four triples
\begin{equation}
\label{triples}
 (j_1,j_2, j_3),\; (j_1, j_5, j_6),\; (j_4, j_2, j_6),\; (j_4, j_5, j_3)
 \end{equation}
does not satisfy the triangle condition.
In particular, $\left\{
\begin{matrix}
j_1 \;  j_2 \;  j_3 \\
j_4 \;  j_5 \;  j_6
\end{matrix}
\right\}=0$ if some $j_i<0$.
In contrast, if all four triples (\ref{triples}) satisfy the triangle condition and one
of them is a degenerate triangle
 then $\left\{
\begin{matrix}
j_1 \;  j_2 \;  j_3 \\
j_4 \;  j_5 \;  j_6
\end{matrix}
\right\}\ne0$ (see \cite[\S9.5.2]{VMK}).
\item\label{it.3} The Biedenharn-Elliott identity yields, in particular, the following
three-term recurrence relation (see \cite[\S9.6.2]{VMK} or \cite[pag. 1963]{SG})
\begin{align*}
i_1E(i_1+1)  \left\{\begin{matrix}
i_1\!\!+\!\!1 \; i_2 \; i_3 \\
\;\; i_4 \;\; i_5 \; i_6
\end{matrix}
\right\}
+F(i_1)  \left\{\begin{matrix}
i_1 \; i_2 \; i_3 \\
i_4 \; i_5 \; i_6
\end{matrix}
\right\}
+(i_1+1)E(i_1)  \left\{\begin{matrix}
i_1\!\!-\!\!1 \; i_2 \; i_3 \\
\;\; i_4\;\; i_5 \; i_6
\end{matrix}
\right\}=0
\end{align*}
where
\begin{align*}
F(i_1)=
(2i_1 + 1)\big(&
 i_1(i_1+1)(-i_1(i_1+1) + i_2(i_2+1) + i_3(i_3+1)) \\
 + &i_5(i_5+1)( i_1(i_1+1) + i_2(i_2+1) - i_3(i_3+1)) \\
 + &i_6(i_6+1)( i_1(i_1+1) - i_2(i_2+1) + i_3(i_3+1)) \\
 - &2i_1(i_1+1)i_4(i_4+1)
\big)
\end{align*}
and
\begin{align*}
E(i_1)\!=\!
 \sqrt{\big(i_1^2 - (i_2-i_3)^2\big)\big((i_2+i_3+1)^2 - i_1^2\big)
 \big(i_1^2 - (i_5-i_6)^2\big)\big((i_5+i_6+1)^2 - i_1^2\big)}.
\end{align*}

\item\label{it.4} The $6j$-symbol is invariant under the permutation of any two columns.
It is also invariant if upper and lower arguments are interchanged in any two columns.

\end{enumerate}

\begin{prop}\label{prop.6j_non-zero}
Let   $j_1$, $j_2$, $j_3$, $j_4$, $j_5$ and $j_6$ be
non-negative
half-integers such that $j_1=j_5+j_6\ge3$,
$j_2=j_3$ and all the triples
\begin{equation}\label{eq.4triples}
  (h, j_2, j_3),\; (h, j_5, j_6),\; (j_4, j_2, j_6),\; (j_4, j_5, j_3)
\end{equation}
satisfy the triangle condition for  $h=j_1$, $h=j_1-1$.
If
$
  \left\{\begin{matrix}
j_1\!-\!1 \; j_2 \; j_3 \\
\;\;j_4 \;\; \; j_5 \; j_6
\end{matrix}
\right\}=0
$
then
$
  \left\{\begin{matrix}
j_1\!-\!2 \; j_2 \; j_3 \\
\;\;j_4 \;\; \; j_5 \; j_6
\end{matrix}
\right\}\ne0
$
and
$
  \left\{\begin{matrix}
j_1\!-\!3 \; j_2 \; j_3 \\
\;\;j_4 \;\; \; j_5 \; j_6
\end{matrix}
\right\}\ne0
$.
In particular, the triples \eqref{eq.4triples} satisfy
the triangle condition for  $h=j_1-2$ and $h=j_1-3$.
\end{prop}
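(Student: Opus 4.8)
The plan is to use the three-term recurrence relation from property~\eqref{it.3} of the $6j$-symbol, specializing the six arguments to $(i_1,i_2,i_3,i_4,i_5,i_6)=(j_1-1,j_2,j_3,j_4,j_5,j_6)$ and to $(i_1,\dots,i_6)=(j_1-2,\dots)$, so that the symbols appearing are exactly the four symbols with top-left entry $j_1,j_1-1,j_1-2,j_1-3$. The key observation is that under the hypotheses $j_1=j_5+j_6$ and $j_2=j_3$, the coefficient functions $E$ and $F$ simplify dramatically, and I would compute them explicitly at the relevant values of $i_1$.

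First I would record the degenerate-triangle consequence: since $j_1=j_5+j_6$, the triple $(j_1,j_5,j_6)$ is a degenerate triangle, so by property~\eqref{it.2} the symbol with top-left entry $j_1$ is \emph{nonzero}. This is the anchor of the argument. Next I would substitute into the recurrence at $i_1=j_1-1$. Writing the recurrence schematically as
\[
i_1E(i_1+1)\,S_{i_1+1}+F(i_1)\,S_{i_1}+(i_1+1)E(i_1)\,S_{i_1-1}=0,
\]
where $S_k$ denotes the symbol with top-left entry $k$, the hypothesis $S_{j_1-1}=0$ kills the middle term, giving
\[
(j_1-1)E(j_1)\,S_{j_1}+j_1\,E(j_1-1)\,S_{j_1-2}=0.
\]
Since $S_{j_1}\ne0$, I need $E(j_1)\ne0$ to conclude $S_{j_1-2}\ne0$. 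I would verify $E(j_1)\ne0$ directly from its formula: the four factors under the radical are positive precisely because the triangle conditions for the triples $(h,j_2,j_3)$ and $(h,j_5,j_6)$ hold strictly at $h=j_1$ (here $j_1=j_5+j_6$ makes $(j_5+j_6+1)^2-j_1^2=2j_1+1>0$, and the assumption $j_1\ge3$ together with $j_2=j_3$ guarantees $j_1^2-(j_2-j_3)^2=j_1^2>0$ and $(j_2+j_3+1)^2-j_1^2>0$ via the triangle condition on $(j_1,j_2,j_3)$). This yields the first nonvanishing claim, $S_{j_1-2}\ne0$.

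For the second claim I would run the recurrence once more at $i_1=j_1-2$:
\[
(j_1-2)E(j_1-1)\,S_{j_1-1}+F(j_1-2)\,S_{j_1-2}+(j_1-1)E(j_1-2)\,S_{j_1-3}=0.
\]
The first term vanishes because $S_{j_1-1}=0$, leaving
\[
F(j_1-2)\,S_{j_1-2}+(j_1-1)E(j_1-2)\,S_{j_1-3}=0.
\]
Here I already know $S_{j_1-2}\ne0$. The main obstacle, and the computational heart of the proof, is showing $F(j_1-2)\ne0$: I expect this requires plugging $i_1=j_1-2$, $i_2=i_3=j_2$, $i_5+i_6=j_1$ into the explicit quartic expression for $F$ and simplifying, using $j_1\ge3$ to rule out the sporadic vanishing. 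Once $F(j_1-2)\ne0$ is established, the displayed relation forces $S_{j_1-3}\ne0$ (the coefficient $(j_1-1)E(j_1-2)$ is nonzero by the same radical-positivity check as before, now using that the triangle conditions hold at $h=j_1-1$ by hypothesis). The final sentence of the proposition—that the triples \eqref{eq.4triples} satisfy the triangle condition for $h=j_1-2,j_1-3$—then follows automatically, since a symbol whose defining triples violated a triangle condition would be forced to vanish by property~\eqref{it.2}, contradicting $S_{j_1-2}\ne0$ and $S_{j_1-3}\ne0$.
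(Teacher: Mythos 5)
Your first half is sound and matches the paper: the recurrence at $i_1=j_1-1$, with the middle term killed by the hypothesis and the anchor $S_{j_1}\ne0$ coming from the degenerate triangle $(j_1,j_5,j_6)$, gives $S_{j_1-2}\ne0$. (One small omission there: $E(j_1)\ne0$ also requires $j_1^2-(j_5-j_6)^2>0$, which you never address; it follows from the triangle condition on $(j_1-1,j_5,j_6)$, not from the conditions at $h=j_1$.)

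The genuine gap is in the second half, and it sits exactly where you flagged "the computational heart": you leave $F(j_1-2)\ne0$ unproven and propose to get it by plugging into the quartic expression for $F$ and simplifying, using only $j_1\ge3$. That route cannot work, because $F(j_1-2)$ depends on $j_4$ through the constant $C=2j_4(j_4+1)-2j_2(j_2+1)-j_5(j_5+1)-j_6(j_6+1)$, and $j_4$ is a free parameter here. Concretely, $(j_1,j_2,j_3,j_4,j_5,j_6)=\bigl(3,2,2,\tfrac52,\tfrac32,\tfrac32\bigr)$ satisfies every triangle hypothesis of the proposition together with $j_1=j_5+j_6\ge3$ and $j_2=j_3$, yet $F(j_1-2)=F(1)=-6\bigl(2-12-\tfrac{15}{2}+\tfrac{35}{2}-\tfrac{15}{4}-\tfrac{15}{4}+\cdots\bigr)$ — computing with the paper's formula, $1\cdot2+C=2-12-\tfrac{15}{2}+\tfrac{35}{2}=0$, so $F(1)=0$. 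Thus $F(j_1-2)\ne0$ is simply not a consequence of the triangle conditions and $j_1\ge3$; it is a consequence of the hypothesis $S_{j_1-1}=0$, which your proposed computation never invokes. The missing idea (the paper's key step) is a \emph{third} application of the recurrence, at $i_1=j_1$: there the term with $S_{j_1+1}$ vanishes because $(j_1+1,j_5,j_6)$ violates the triangle condition (as $j_1=j_5+j_6$), and the term with $S_{j_1-1}$ vanishes by hypothesis, so $F(j_1)S_{j_1}=0$; since $S_{j_1}\ne0$ this forces $F(j_1)=0$, i.e.\ $j_1(j_1+1)+C=0$. Only now can one conclude: if also $F(j_1-2)=0$, then $(j_1-2)(j_1-1)+C=0$, whence $j_1(j_1+1)=(j_1-2)(j_1-1)$ and $j_1=\tfrac12$, contradicting $j_1\ge3$. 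With that scalar identity supplied, the rest of your argument (recurrence at $i_1=j_1-2$, then the triangle conditions for $h=j_1-2,j_1-3$ by contraposition of Property (1)) goes through as in the paper.
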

\begin{proof}
Fix $(i_2,i_3,i_4,i_5,i_6)=(j_2,j_3,j_4,j_5,j_6)$.
Since $j_2=j_3$, we have
\begin{align*}
E(i_1)\!&=\!
 \sqrt{i_1^2\big((2j_2+1)^2 - i_1^2\big)
 \big(i_1^2 - (j_5-j_6)^2\big)\big((j_5+j_6+1)^2 - i_1^2\big)}. \\
 F(i_1)&=
-(2i_1 + 1)i_1(i_1+1) \\
&\hspace{1cm}\times(
 i_1(i_1+1) - 2j_2(j_2+1) - j_5(j_5+1) - j_6(j_6+1) + 2j_4(j_4+1)
).
\end{align*}
As the triangle condition is satisfied by
$(j_1-1, j_5, j_6)$, we get $|j_5-j_6|\le j_1-1$ and thus  $|j_5-j_6|<j_1$. Likewise, as
the triangle conditions satisfied by
$(j_1, j_2, j_2)$, we get $j_1<2j_2+1$. Moreover, by hypothesis, $j_1=j_5+j_6$, so
$j_1<j_5+j_6+1$.
Recalling that $j_1>0$, these inequalities imply that
\[
E(j_1)\ne0.
\]
Observe next that $F(j_1)=0$. Indeed, $(j_1+1,j_5,j_6)$ does not satisfy the triangle condition and,
by hypothesis, $
  \left\{\begin{matrix}
j_1\!-\!1 \; j_2 \; j_3 \\
\;\;j_4 \;\; \; j_5 \; j_6
\end{matrix}
\right\}=0.
$
It follows from Property \eqref{it.3} applied to $i_1=j_1$ that
\begin{equation}
\label{uo}
F(j_1)  \left\{\begin{matrix}
j_1 \; j_2 \; j_3 \\
j_4 \; j_5 \; j_6
\end{matrix}
\right\}=0.
\end{equation}
But the second factor is non-zero since all four triples (\ref{triples}) taken from (\ref{uo}) satisfy the triangle condition
and $(j_1,j_5,j_6)$ is a degenerate triangle. Thus $F(j_1)=0$.

We next claim that  $F(j_1-2)\ne0$. Indeed, from $j_1>0$ and $F(j_1)=0$ we obtain
\begin{equation*}\label{eq.Cond1}
  j_1(j_1+1) - 2j_2(j_2+1) - j_5(j_5+1) - j_6(j_6+1) + 2j_4(j_4+1)=0.
\end{equation*}
If $F(j_1-2)=0$ then $j_1>2$ implies $j_1(j_1+1) = (j_1-2)(j_1-1)$, so $j_1=\frac{1}{2}$, a contraction.
This proves that $F(j_1-2)\ne0$.

We apply Property \eqref{it.3} to $i_1=j_1-1$. By above,
$(j_1-1)E(j_1)
 \left\{\begin{matrix}
j_1 \; j_2 \; j_3 \\
\;\;j_4 \;\; \; j_5 \; j_6
\end{matrix}
\right\}\neq 0
$ and, by hypothesis,
$
 \left\{\begin{matrix}
j_1\!-\!1 \; j_2 \; j_3 \\
\;\;j_4 \;\; \; j_5 \; j_6
\end{matrix}
\right\}= 0
$. We infer
$  \left\{\begin{matrix}
j_1\!-\!2 \; j_2 \; j_3 \\
\;\;j_4 \;\; \; j_5 \; j_6
\end{matrix}
\right\}\ne 0$.

We finally apply Property \eqref{it.3}
to $i_1=j_1-2$. By hypothesis,
$
 \left\{\begin{matrix}
j_1\!-\!1 \; j_2 \; j_3 \\
\;\;j_4 \;\; \; j_5 \; j_6
\end{matrix}
\right\}= 0
$, while $F(j_1-2)\left\{\begin{matrix}
j_1\!-\!2 \; j_2 \; j_3 \\
\;\;j_4 \;\; \; j_5 \; j_6
\end{matrix}
\right\}\ne 0$. It follows that
$\left\{\begin{matrix}
j_1\!-\!3 \; j_2 \; j_3 \\
\;\;j_4 \;\; \; j_5 \; j_6
\end{matrix}
\right\}\ne0$.
\end{proof}

\begin{remark}
 Proposition \ref{prop.6j_non-zero} is not true without the hypothesis $j_2=j_3$
and, indeed, there are many examples showing this.
 For instance, if $(j_1,j_2,j_3,j_4,j_5,j_6)$ is either
\[
 (3, 3, 2, 2, 1, 2),\;(4,3/2,7/2,3/2,3,1),\;(6,5/2,13/2,3,9/2,3/2)
\]then
$j_1=j_5+j_6\ge3$, the triples \eqref{eq.4triples} satisfy the triangle
condition for  $h=j_1$, $h=j_1-1$;
$\left\{\begin{matrix}
j_1\!-\!1 \; j_2 \; j_3 \\
\;\;j_4 \;\; \; j_5 \; j_6
\end{matrix}
\right\}
=0$
(this can be verified with an on-line calculator or
from the explicit formula for the $6j$-symbol given in \cite[\S9.2.1]{VMK})
but $(j_1-3,j_2,j_3)$ does not satisfy the triangle condition and thus
$\left\{\begin{matrix}
j_1\!-\!3 \; j_2 \; j_3 \\
\;\;j_4 \;\; \; j_5 \; j_6
\end{matrix}
\right\}=0$.
\end{remark}

\section{Uniqueness of faithful uniserial \texorpdfstring{$\g$}{}-modules of length 3}

\begin{theorem}\label{uno}
Suppose $V$ is a faithful uniserial $\g$-module of length 3 with socle factors $V(a),V(b),V(c)$.
Then $c=a$. Moreover,
\begin{enumerate}[(i)]
 \item If $m=1$ then either $b=a+1$, or $a\geq 1$ and $b=a-1$.
 \item If $m\geq 3$ then either $a=0$ and $b=m$, or $a=1$ and $b=m+1$, or $a=1$ and $b=m-1$, or $m=3$, $a=4$ and $b=3$.
\end{enumerate}
Furthermore, in all cases the isomorphism type of $V$ is completely determined by that of its socle factors.
\end{theorem}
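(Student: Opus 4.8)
The plan is to extract the structure of $V$ from the matrix recognition criterion (Theorem~\ref{thm.CS_2.4}) and then convert the requirement that $R$ be a genuine representation of $\g$ into the vanishing of a family of $6j$-symbols, which I analyze using Proposition~\ref{prop.6j_non-zero}. First I would note that $\r=\h_n$ satisfies $[\g,\r]=\r$, so fixing a basis adapted to a composition series of $V$ puts $R$ in block-upper-triangular form with diagonal blocks $R_a,R_b,R_c$ (the $\sl(2)$-actions on the socle factors), and by Theorem~\ref{thm.CS_2.4} the two first-superdiagonal blocks are \emph{nonzero} $\sl(2)$-maps $X_2\in\Hom_{\sl(2)}(V(m),\Hom(V(b),V(a)))$ and $X_3\in\Hom_{\sl(2)}(V(m),\Hom(V(c),V(b)))$. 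To obtain $c=a$: the central generator $z$ is fixed by $\sl(2)$, so $Z=R(z)$ is an $\sl(2)$-endomorphism of $V$; it is strictly upper triangular (as $z\in[\g,\r]$) and, since $z\in[V(m),V(m)]$, its matrix is a bracket of first-superdiagonal blocks and hence is supported in the top-right corner as a map $V(c)\to V(a)$. Faithfulness forces $Z\neq0$, and Schur's lemma gives $c=a$.

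With $c=a$, the existence of nonzero $X_2,X_3$ is, by Clebsch-Gordan, equivalent to $(a/2,b/2,m/2)$ satisfying the triangle condition, and then each of $X_2,X_3$ is unique up to a scalar. The substantive requirement is that $R$ respect the bracket of $\h_n$: the antisymmetric map $(u,u')\mapsto X_2(u)X_3(u')-X_2(u')X_3(u)$ is an $\sl(2)$-map $\Lambda^2 V(m)\to\Hom(V(a),V(a))$ that must coincide with the central action, i.e. be supported on the trivial constituent. Since $m$ is odd, the constituents of $\Lambda^2 V(m)$ are exactly the $V(2s)$ with $s$ even and $0\le s\le m-1$, and a recoupling computation identifies the $V(2s)$-component of this bracket with a nonzero multiple of $\Theta(s):=\left\{\begin{smallmatrix} m/2 & m/2 & s\\ a/2 & a/2 & b/2\end{smallmatrix}\right\}$. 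Hence $V$ can exist precisely when $\Theta(s)=0$ for every even $s$ with $2\le s\le\min(a,m-1)$, while $\Theta(0)\ne0$ (a degenerate triangle, Property~\eqref{it.2}) records the nonzero, hence faithful, central action. In particular there are no constraints at all when $a\le1$.

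The case analysis then proceeds as follows. For $m=1$ we have $\Lambda^2 V(1)=V(0)$, so there is no nontrivial constituent and only the triangle condition and parity survive, forcing $b=a\pm1$ and giving~(i). For $m\ge3$ I would exploit two obstructions to the simultaneous vanishing of the $\Theta(s)$. The first is that $\Theta(s)$ contains the triple $(a/2,a/2,s)$, which is a degenerate triangle exactly when $s=a$; thus Property~\eqref{it.2} gives $\Theta(a)\neq0$, and whenever $a$ is even with $2\le a\le m-1$ this $\Theta(a)$ is itself required to vanish — a contradiction ruling out all such $a$. The second is Proposition~\ref{prop.6j_non-zero}, applied after a column permutation with $j_1=s$, $j_2=j_3=m/2$, $j_5=j_6=a/2$ (so $j_5+j_6=a$): it shows that two neighbouring symbols cannot both vanish, which excludes the large values of $a$ where several constraints are present. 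The residual borderline situations (odd $a$, and even $a$ just above $m-1$) carry a single constraint and must be settled from the explicit $6j$-formula; the unique strict (non-degenerate) triangle producing a genuine zero is $m=3$, $a=4$, $b=3$, the exceptional zero of the introduction. I expect this last point — verifying that exactly one accidental zero occurs and isolating it as the exceptional one — to be the main obstacle.

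Finally, for uniqueness, note that for each admissible $(a,b)$ the maps $X_2,X_3$ are determined up to scalars $\alpha,\gamma\in\mathbb{F}^\times$, and the representation relation then fixes $Z$ as a fixed multiple of $\alpha\gamma$. Conjugating by the block-scalar change of basis $\mathrm{diag}(\mathrm{id},\lambda\,\mathrm{id},\mu\,\mathrm{id})$ multiplies the $(1,2),(2,3),(1,3)$ blocks by $\lambda^{-1},\lambda\mu^{-1},\mu^{-1}$ respectively; choosing $\lambda,\mu$ suitably carries any admissible $(\alpha,\gamma)$ to any other, with the central block transforming consistently. Hence all faithful uniserial modules with the given socle factors are isomorphic, so the isomorphism type of $V$ is determined by its socle factors, completing the argument.
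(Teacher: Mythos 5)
Your setup and most of your case analysis track the paper's own proof: the same matrix-recognition reduction, the same argument that faithfulness of the central action plus Clebsch--Gordan forces $c=a$, the same translation of the bracket constraint into vanishing of the symbols $\Theta(s)$ (this is exactly \cite[Corollary 9.2]{CS}, which the paper invokes), the same degenerate-triangle obstruction killing even $a$ with $2\le a\le m-1$, and essentially the same use of Proposition \ref{prop.6j_non-zero} to eliminate the cases carrying at least two constraints. One imprecision there: the proposition requires $j_1=j_5+j_6$, so with $j_5=j_6=a/2$ you must take $j_1=a$ (not ``$j_1=s$'') and feed it the hypothesis $\Theta(a-1)=0$, concluding $\Theta(a-3)\ne 0$; your gloss ``two neighbouring symbols cannot both vanish'' is not what the proposition says, although the application you intend is the correct one.

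The genuine gap is exactly where you predict it: the residual single-constraint cases, namely $m=3$ with $a\ge 4$ (and the mirrored case $a=3$ with $m$ odd), where the obstruction is the single equation $\Theta(2)=0$. This is not a finite check: for $m=3$ there are infinitely many candidates $(a,b)$ with $a\ge 4$, $b$ of the opposite parity and $|a-b|\le 3$, so ``settled from the explicit $6j$-formula'' is a placeholder for precisely the claim the theorem turns on, and you do not prove it. The paper closes this with the Biedenharn--Elliott recurrence (item \eqref{it.3} of the $6j$-symbol properties) applied at $i_1=3$ to $\left\{\begin{smallmatrix} i_1 & 3/2 & 3/2\\ b/2 & a/2 & a/2\end{smallmatrix}\right\}$: the symbol at $i_1=4$ vanishes because $(4,\tfrac32,\tfrac32)$ violates the triangle condition, the symbol at $i_1=3$ is nonzero because $(3,\tfrac32,\tfrac32)$ is a degenerate triangle and all four triples are admissible (here $a\ge 3$ is used), and the symbol at $i_1=2$ is zero by hypothesis; hence $F(3)=0$. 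Unwinding $F(3)=0$ gives the Diophantine equation $a(a+2)=b(b+2)+9$, i.e.\ $(a+b+2)(a-b)=9$, whose only solution in non-negative integers is $(a,b)=(4,3)$; in the mirrored case it gives $(m,b)=(4,3)$, excluded since $m$ is odd. Supplying this recurrence argument (or an equivalent derivation from a closed formula) is what your proposal is missing; without it, the isolation of the exceptional module $(m,a,b)=(3,4,3)$ --- and the exclusion of all other borderline $(a,b)$ --- is asserted rather than proved.
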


\begin{proof} Let $0=V_0\subset V_1\subset V_2\subset V_3=V$ be the only composition series of $V$
as $\g$-module. As $\sel(2)$ is semisimple, there exist $\sel(2)$-submodules
$W_2$ and $W_3$ of $V$ such that
$V_2=V_1\oplus W_2$ and $V_3=V_2\oplus W_3$.
Here $V_1\simeq V(a)$, $W_2\simeq V(b)$ and $W_3\simeq V(c)$. Let $B_1,B_2,B_3$
be bases of $V_1,W_2,W_3$, respectively, so that $B=B_1\cup B_2\cup B_3$ is adapted to
a basis of $V$.
Since $\h_n=[\g,\h_n]$, it follows from \S\ref{sec.matrix_recognition}
that there is a block upper triangular matrix
representation $R:\g\to\gl(d)$, $d=a+b+c+3$, relative to $B$, of the form
\begin{equation}
\label{zeta}
R(s+h)=\left(
         \begin{array}{ccc}
           R_1(s) & X(h) & Z(h) \\
           0 & R_2(s) & Y(h) \\
           0 & 0 & R_3(s) \\
         \end{array}
       \right),\quad s\in\sl(2),\; h\in\h_n.
\end{equation}

We may view $\gl(d)$ as a $\g$-module via $x\cdot A=[R(x),A]$.
Note that the 6 upper triangular blocks, say $M_{11}, M_{22}, M_{33}, M_{12}, M_{23}, M_{13}$,
become $\sel(2)$-submodules of $\gl(d)$. Moreover,
$X:\h_n\to M_{12}$, $Y:\h_n\to M_{23}$, $Z:\h_n\to M_{13}$ are $\sel(2)$-homomorphisms
and $M_{12}\simeq\Hom(V(b),V(a))$, $M_{23}\simeq\Hom(V(c),V(b))$, $M_{13}\simeq\Hom(V(c),V(a))$ as $\sel(2)$-modules.
By (\ref{zeta}), $R(\z)=R([\h_n,\h_n])\subseteq M_{13}$.
Thus, $X$ and $Y$ vanish on $\z$. Since $R$ is faithful, $Z$ does
not vanish on $\z$.
Hence $V(0)$ enters  $V(c)\otimes V(a)$
and this implies, by the Clebsch-Gordan formula, that
$c=a$ and $Z(\z)$ consists of scalar operators. Moreover, since $m\not\equiv 2a\mod 2$, $Z$ must vanish on $V(m)$
and therefore $Z$ is completely determined by $X$ and $Y$,
whose restrictions to $V(m)$ are non-trivial by Theorem \ref{thm.CS_2.4}.
Conjugating by
a suitable block diagonal matrix, with each block a scalar matrix,
we can arbitrarily scale all blocks in the first superdiagonal.
This  shows that $V$ is uniquely determined by its socle factors (cf. \cite[Proposition 3.2]{CS}).
Furthermore, since $V(m)$
enters  $V(a)\otimes V(b)$, we obtain (i).

We assume for the remainder of the proof that $m\geq 3$. Consider the $\sl(2)$-homomorphism $\Lambda^2 V(m)\to M_{13}$ given by
$u\wedge v=X(u)Y(v)-X(v)Y(u)=Z[u,v]$. By above, its image, say $\mathcal J$, is isomorphic to $V(0)$. Set
$r=\min\{2m-2,2a\}$ if $a$ is even, and $r=\min\{2m-2,2a-2\}$ if $a$ is odd.

Suppose first
\begin{equation}
\label{6j}
\left\{\begin{matrix}
\frac{m}{2} \; \frac{r}{2} \; \frac{m}{2}  \\[.6mm]
\frac{a}{2} \;\, \frac{b}{2} \;\, \frac{a}{2}
\end{matrix}
\right\}
\end{equation}
is non-zero. Then, according to \cite[Corollary 9.2]{CS}, $V(r)$ enters  $\mathcal J$.
Therefore $r=0$.
Recalling that $m\geq 3$, and taking into account that $V(m)$
enters  $V(a)\otimes V(b)$, we obtain $a=0$ with $b=m$
if $a$ is even, and $a=1$ with $b=m \pm 1$ if $a$ is odd.

Suppose next (\ref{6j}) is zero. We deal first with the case when $a$ is even. If $r=2a$ then
all four triples (\ref{triples}) taken from (\ref{6j}) satisfy the triangle condition and $(\frac{a}{2},\frac{r}{2},\frac{a}{2})$ is a degenerate triangle,
so Property \eqref{it.2} yields that (\ref{6j}) is non-zero, a contradiction. Therefore, we must have $r=2m-2$ with $m-1<a$.
Set
\begin{align*}
 j_1&=m,& j_2=j_3=\frac{a}{2},\\
j_4&=\frac{b}{2},& j_5=j_6=\frac{m}{2}.
\end{align*}
From the fact that (\ref{6j}) is zero, it follows from Property \eqref{it.4} that
\begin{equation}
\label{6j2}
\left\{\begin{matrix}
j_1\!-\!1 \; j_2 \; j_3  \\[.6mm]
\;\; j_4 \;\;\; j_5 \; j_6
\end{matrix}
\right\}=0.
\end{equation}
Moreover, all four triples (\ref{triples}) taken from (\ref{6j2}) satisfy the triangle condition. Furthermore, since $m\leq a$,
all four triples (\ref{triples}) taken from $(j_1,j_2,j_3,j_4,j_5,j_6)$ satisfy the triangle condition. Thus, all hypotheses of
Proposition \ref{prop.6j_non-zero} are met. We obtain
$\left\{\begin{matrix}
j_1\!-\!3 \; j_2 \; j_3 \\
\;\;j_4 \;\; \; j_5 \; j_6
\end{matrix}
\right\}\ne0.
$
Making use of \cite[Corollary 9.2]{CS} and Property \eqref{it.4}, we infer that
$V(r-4)$ appears in $\mathcal J$. Thus $r=4$, that is, $m=3$.
We now need to find out the possible values of $a$ and $b$.

The fact that (\ref{6j}) is zero becomes
\begin{equation}
\label{te0}
\left\{\begin{matrix}
\frac{4}{2} \; \frac{3}{2} \; \frac{3}{2}  \\[.6mm]
\frac{b}{2} \; \frac{a}{2} \; \frac{a}{2}
\end{matrix}
\right\}=\left\{\begin{matrix}
\frac{3}{2} \; \frac{4}{2} \; \frac{3}{2}  \\[.6mm]
\frac{a}{2} \; \frac{b}{2} \; \frac{a}{2}
\end{matrix}
\right\}=0.
\end{equation}
Now
\begin{equation}
\label{te}
\left\{\begin{matrix}
\frac{4}{2}\!+\!{\scriptstyle 1} \; \frac{3}{2} \; \frac{3}{2}  \\[.6mm]
\;\;\;\frac{b}{2} \;\;\; \frac{a}{2} \; \frac{a}{2}
\end{matrix}
\right\}
\ne0,
\end{equation}
since $a\geq 3$ and therefore all four triples (\ref{triples}) taken from (\ref{te}) satisfy the triangle condition
and we have the degenerate triangle $(\frac{6}{2},\frac{3}{2},\frac{3}{2})$. On the other hand,
\begin{equation}
\label{te1}
\left\{\begin{matrix}
\frac{4}{2}\!+\!{\scriptstyle 2} \; \frac{3}{2} \; \frac{3}{2}  \\[.6mm]
\;\;\;\frac{b}{2}\;\;\; \frac{a}{2} \; \frac{a}{2}
\end{matrix}
\right\}
=0
\end{equation}
as $(\frac{8}{2},\frac{3}{2},\frac{3}{2})$ does not satisfy the triangle condition. It follows from Property \eqref{it.3} applied to
$(i_2,i_3,i_4,i_5,i_6)=(\frac{3}{2},\frac{3}{2},\frac{b}{2},\frac{a}{2},\frac{a}{2})$ that $F(3)=0$. The definition of $F(3)$ now gives
\begin{equation}\label{eq.ab}
a(a+2)=b(b+2)+9,
\end{equation}
and the only pair $(a,b)$ of non-negative integers satisfying \eqref{eq.ab} is $(a,b)=(4,3)$.

The final case, when $a$ is odd, is impossible. Indeed, set
\begin{align*}
 j_1&=\frac{r}{2}+1=\min\{m,a\},& j_2=j_3=\max\left\{\frac{m}{2},\frac{a}{2}\right\},\\
j_4&=\frac{b}{2},& j_5=j_6=\min\left\{\frac{m}{2},\frac{a}{2}\right\}.
\end{align*}
Then Proposition \ref{prop.6j_non-zero} applies to give $r=4$.
As above, this gives $m=4$ or $a=4$, which is impossible
since $m$ and $a$ are both odd.
\end{proof}

\section{Construction of faithful uniserial \texorpdfstring{$\g$}{}-modules of length 3}\label{sec.construction}

\begin{theorem}\label{dos}
In all cases below there is faithful uniserial $\g$-module of length
3 with socle factors $V(a),V(b),V(a)$.
\begin{enumerate}[(i)]
 \item $m=1$ with $b=a+1$ or $b=a-1$ (in the latter case $a>0$).
 \item$m\geq 3$, with $(a,b)=(0,m)$ or $(a,b)=(1,m+1)$ or $(a,b)=(1,m-1)$ or $(m,a,b)=(3,4,3)$.
\end{enumerate}
\end{theorem}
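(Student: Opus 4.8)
The plan is to run the recipe (1)--(3) of the Introduction in reverse: for each admissible pair $(a,b)$ we produce the two superdiagonal blocks explicitly and verify that the matrix Lie algebra they generate is Heisenberg. Writing $\omega$ for the $\sl(2)$-invariant symplectic form on $V(m)$ (so that $[u,v]=\omega(u,v)z$ in $\h_{n}$), the analysis in the proof of Theorem \ref{uno} shows that it is enough to exhibit nonzero $\sl(2)$-homomorphisms
\[
X\colon V(m)\to\Hom(V(b),V(a)),\qquad Y\colon V(m)\to\Hom(V(a),V(b))
\]
subject to the single requirement
\[
X(u)Y(v)-X(v)Y(u)=c\,\omega(u,v)\,\mathrm{id}_{V(a)},\qquad u,v\in V(m),
\]
for some scalar $c\neq0$. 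Granting this, we extend $X,Y$ by zero on $\z$, define $R(z)$ to be the scalar block $c\,\mathrm{id}_{V(a)}$ forced by the identity above, and assemble the block upper triangular $R$ of \eqref{zeta}; the remaining bracket relations of $\g$ hold automatically (here the equality of the two outer socle factors is exactly what forces $[R(s),R(z)]=0$ for $s\in\sl(2)$). Since $\h_{n}=[\g,\h_{n}]$ and both first-superdiagonal blocks are non-trivial on $V(m)$, Theorem \ref{thm.CS_2.4} makes $V$ uniserial, while $c\neq0$ forces $[R(V(m)),R(V(m))]=\mathbb{F}\,R(z)\neq0$, so the generated algebra is isomorphic to $\h_{n}$ and $R$ is faithful. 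Uniqueness of the isomorphism type is already delivered by Theorem \ref{uno}.

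First we settle the existence of $X$ and $Y$. By the Clebsch--Gordan formula, $\Hom_{\sl(2)}(V(m),\Hom(V(b),V(a)))$ is one-dimensional precisely when $(\tfrac a2,\tfrac b2,\tfrac m2)$ satisfies the triangle condition, and the same triple governs $Y$; hence both maps exist and are unique up to a scalar exactly in that case. A direct check shows that every pair listed in (i)--(ii) yields a (degenerate) admissible triangle. Fixing such $X$ and $Y$, the antisymmetrised composition $u\wedge v\mapsto X(u)Y(v)-X(v)Y(u)$ is an $\sl(2)$-homomorphism $\Lambda^2V(m)\to\End(V(a))$, whose image we denote $\mathcal J$, exactly as in the proof of Theorem \ref{uno}.

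The core of the argument is to prove that $\mathcal J=\mathbb{F}\,\mathrm{id}_{V(a)}$ and that this line is genuinely present. Since $m$ is odd, $\Lambda^2V(m)=V(0)\oplus V(4)\oplus\cdots\oplus V(2m-2)$, all of whose constituents have highest weight $\equiv0\bmod4$, whereas $\End(V(a))=V(0)\oplus V(2)\oplus\cdots\oplus V(2a)$. For $m=1$ one has $\Lambda^2V(1)=V(0)$, and for $(a,b)\in\{(0,m),(1,m+1),(1,m-1)\}$ the only constituent common to $\Lambda^2V(m)$ and $\End(V(a))$ is $V(0)$; in all these cases $\mathcal J\subseteq V(0)$ for free. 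The $V(0)$-component is then seen to survive by \cite[Corollary 9.2]{CS}: it is controlled by $\left\{\begin{smallmatrix}\frac m2&0&\frac m2\\ \frac a2&\frac b2&\frac a2\end{smallmatrix}\right\}$, whose four triples $(\tfrac m2,0,\tfrac m2)$, $(\tfrac m2,\tfrac b2,\tfrac a2)$, $(\tfrac a2,0,\tfrac a2)$, $(\tfrac a2,\tfrac b2,\tfrac m2)$ all satisfy the triangle condition and include a degenerate one, so the symbol is nonzero by Property \eqref{it.2}. This yields $c\neq0$.

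The genuine obstacle is the exceptional entry $(m,a,b)=(3,4,3)$, where $\Lambda^2V(3)=V(0)\oplus V(4)$ and $\End(V(4))$ does contain a copy of $V(4)$, so a spurious $V(4)$ could a priori appear in $\mathcal J$ and spoil the Heisenberg relation. Here one must show that the $V(4)$-component of $\mathcal J$ vanishes while the $V(0)$-component does not. The first is precisely the ``miracle'' of the Introduction: by \cite[Corollary 9.2]{CS} the $V(4)$-part is a multiple of the same exceptional $6j$-zero produced in the proof of Theorem \ref{uno}, namely $\left\{\begin{smallmatrix}\frac32&\frac42&\frac32\\ \frac42&\frac32&\frac42\end{smallmatrix}\right\}=0$, so it drops out; the second follows from the degenerate-triangle argument of the previous paragraph. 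Thus $\mathcal J=\mathbb{F}\,\mathrm{id}_{V(4)}$ with $c\neq0$, completing this case. As a sanity check, one can confirm the whole construction against an explicit realisation, such as the $5\times4$ and $4\times5$ superdiagonal blocks displayed in the Introduction.
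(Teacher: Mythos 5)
Your proposal is correct in substance, but it takes a genuinely different route from the paper. The paper proves Theorem~\ref{dos} by direct construction: for each of the six cases it exhibits the matrices $X(v_i)$, $Y(v_i)$ and $Z(z)$ explicitly, reduces everything to the single identity \eqref{funca} (which is precisely your condition $X(u)Y(v)-X(v)Y(u)=c\,\omega(u,v)\,\mathrm{id}$), and leaves its verification to the reader; uniseriality and faithfulness then come from $Z\ne 0$ and Theorem~\ref{thm.CS_2.4}, exactly as in your last step. You instead prove existence abstractly: Clebsch--Gordan gives $X,Y$ (unique up to scale); comparing the constituents of $\Lambda^2V(m)$ (all highest weights $\equiv 0\bmod 4$) with those of $\End(V(a))$ confines the image $\mathcal J$ to the scalars in every case except $(m,a,b)=(3,4,3)$; the degenerate-triangle criterion of Property~\eqref{it.2} together with \cite[Corollary 9.2]{CS} shows the $V(0)$-component survives; and the exceptional $6j$-zero removes the unwanted $V(4)$ in the remaining case. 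This is more conceptual and unifies the existence proof with the $6j$-machinery that the paper deploys only for uniqueness and non-existence (Theorem~\ref{uno}, Proposition~\ref{prop.6j_non-zero}). What the paper's explicit route buys, besides being self-contained, is the realizations themselves: they are reused in Case~2 of Proposition~\ref{no4}, where the blocks $D(z)=(a+2)I_{a+1}$ and $E(z)=-(a+1)I_{a+2}$ coming from this construction produce the contradiction, so a purely abstract existence proof would leave a hole later in the paper. (Minor slip: for $(m,a,b)=(3,4,3)$ the triangle $(\tfrac a2,\tfrac b2,\tfrac m2)=(2,\tfrac32,\tfrac32)$ is admissible but not degenerate; this is harmless, since only admissibility is needed for the existence of $X,Y$.)

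Two caveats need attention before your argument is complete. First, killing the $V(4)$-component in the case $(3,4,3)$ requires the implication ``$6j$-symbol $=0$ implies the component is absent''; the paper itself only ever invokes the opposite direction of \cite[Corollary 9.2]{CS}, so you must confirm the corollary is a proportionality statement (it is, in effect: the $V(r)$-component lies in a one-dimensional $\Hom$-space whose coefficient is the symbol, which is also what the ``miracle'' discussion in the Introduction presupposes). Second, and more seriously, the vanishing $\left\{\begin{smallmatrix}\frac32\;\frac42\;\frac32\\ \frac42\;\frac32\;\frac42\end{smallmatrix}\right\}=0$ is not actually ``produced in the proof of Theorem~\ref{uno}'': that proof assumes \eqref{6j} vanishes and deduces $(a,b)=(4,3)$, not the converse, and the converse is what your construction needs. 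It does follow from the same three-term recurrence: for $(a,b)=(4,3)$ one has $F(3)=0$ directly from \eqref{eq.ab}, the symbol \eqref{te1} vanishes structurally, and $E(3)\neq 0$, so Property~\eqref{it.3} at $i_1=3$ forces \eqref{te0} to vanish. Add this one line (or a direct evaluation of the symbol) and the $(3,4,3)$ case of your proof is closed.
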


\begin{proof} We will give an explicit faithful uniserial representation
$R:\g\to\gl(d)$, $d=2a+b+3$, in every case listed above.
In each case,
$$
R(s+v+a z)=\left(
         \begin{array}{ccc}
           R_a(s) & X(v) & Z(a z) \\
           0 & R_b(s) & Y(v) \\
           0 & 0 & R_a(s) \\
         \end{array}
       \right),\quad s\in\sl(2),\; v\in V(m),\; a\in\C.
$$
Here $X:V(m)\to\Hom(V(b),V(a))$ and $Y:V(m)\to\Hom(V(a),V(b))$
are $\sl(2)$-homomorphisms given in matrix form relative to standard bases of $V(a)$ and $V(b)$, and
$R_a$ and $R_b$ are as given in \S\ref{lieg}. Moreover, $Z:\z\to\gl(V(a))$
is an $\sl(2)$-homomorphism given in matrix scalar form.
It is straightforward to see (cf. \cite[\S 3]{CS})
that such $R$ is indeed a Lie homomorphism
(and hence a faithful uniserial representation by Theorem \ref{thm.CS_2.4}) provided $Z\ne0$ and
\begin{equation}
\label{funca}
X(v_i)Y(v_j)-X(v_j)Y(v_i)=Z([v_i,v_j]),\quad 0\leq i\leq m.
\end{equation}
We leave it to the reader to verify (\ref{funca}) in each case, recalling from \S\ref{lieg} that
$$
[v_i,v_j]=0\text{ if }i+j\neq m,\text{ while } [v_i,v_{m-i}]=(-1)^i\binom{m}{i}z.
$$
Let $A'$ stand for the transpose of a matrix $A$ and set
$v=\underset{0\leq i\leq m}\sum a_iv_i\in V(m)$, $a_i\in\C$.

\medskip

\begin{enumerate}[(1)]
 \item\label{it.0m} $m\geq 1$ and $V$ has socle factors $V(0),V(m),V(0)$.

\medskip
\noindent
 $Z(z)=(2)$.

\medskip
\noindent
\[
X(v)=\begin{pmatrix}
    -\binom{m}{m}a_{m} & \binom{m}{m-1}a_{m-1} & \cdots & \binom{m}{2}a_{2} & -\binom{m}{1}a_{1} & \binom{m}{0}a_{0}
\end{pmatrix}.
\]
\medskip
\noindent
\[
Y(v)=
\begin{pmatrix}
     a_{0} & a_{1}  & \cdots & a_{m-1} &  a_{m}
\end{pmatrix}'.
\]

\item\label{it.1m} $m\geq 1$ and $V$ has socle factors $V(1),V(m+1),V(1)$.

\medskip
\noindent

$Z(z)=\begin{pmatrix}
  m+2 & 0 \\ 0 & m+2
 \end{pmatrix}
$.

\medskip
\noindent

\[
X(v)=\begin{pmatrix}
    -\binom{m}{m}a_{m} & \binom{m}{m-1}a_{m-1} & \cdots & \binom{m}{2}a_{2} & -\binom{m}{1}a_{1} & \binom{m}{0}a_{0} & 0 \\[2mm]
    0 & -\binom{m}{m}a_{m} & \binom{m}{m-1}a_{m-1}  & \cdots & \binom{m}{2}a_{2} & -\binom{m}{1}a_{1} & \binom{m}{0}a_{0}
\end{pmatrix}.
\]
\medskip
\noindent

\[
Y(v)=\begin{pmatrix}
    (m+1) a_{0} & ma_{1}  & \cdots & 2a_{m-1} &  a_{m} & 0 \\[2mm]
              0 & a_{0} & 2a_{1}   & \cdots & ma_{m-1} &  (m+1)a_{m}
\end{pmatrix}'.
\]

\item\label{it.1m-1} $m\geq 1$ and $V$ has socle factors $V(1),V(m-1),V(1)$.

\medskip
\noindent

$Z(z)=I_2$.

\medskip
\noindent

\[
X(v)=\begin{pmatrix}
 \binom{m-1}{m-1}a_{m-1} & -\binom{m-1}{m-2}a_{m-2} & \cdots & \binom{m-1}{2}a_{2} & -\binom{m-1}{1}a_{1} & \binom{m-1}{0}a_{0} \\[2mm]
 \binom{m-1}{m-1}a_{m} & -\binom{m-1}{m-2}a_{m-1}  & \binom{m-1}{m-3}a_{m-2}& \cdots & -\binom{m-1}{1}a_{2} & \binom{m-1}{0}a_{1}
\end{pmatrix}.
\]
\medskip
\noindent

\[
Y(v)=\begin{pmatrix}
     a_{1}  & a_{2}  & \cdots & a_{m-1} &  a_{m}  \\[2mm]
     -a_{0} & -a_{1} & -a_{2}   & \cdots & -a_{m-1}
\end{pmatrix}'.
\]

\item\label{it.a} $m=1$ and $V$ has socle factors $V(a)$, $V(a+1)$, $V(a)$.

Let $I_k$ be the identity matrix of size $k$,
let $0_k$ be the zero column matrix with $k$ rows.
Let $J^+_k,J^-_k$ be the diagonal matrices of size $k$ given by
\[
J_k^+=
\begin{pmatrix}
    k & 0 & \cdots & 0 \\
    0 & k-1 & \cdots & 0 \\[-1mm]
    0 & 0 & \ddots & 0 \\
    0 & 0 & \cdots & 1
    \end{pmatrix},\qquad
J_k^-=
\begin{pmatrix}
    1 & 0 & \cdots & 0 \\
    0 & 2 & \cdots & 0 \\[-1mm]
    0 & 0 & \ddots & 0 \\
    0 & 0 & \cdots & k
    \end{pmatrix}.
\]

\medskip
\noindent
$Z(z)=(a+2)I_{a+1}$.

\medskip
\noindent
\[
X(v_0)=\begin{pmatrix}
    0_{a+1} & I_{a+1}
\end{pmatrix}\quad\text{ and }\quad
X(v_1)=\begin{pmatrix}
    -I_{a+1} & 0_{a+1}
\end{pmatrix}.
\]
\medskip
\noindent
\[
Y(v_0)=
\begin{pmatrix}
    J^+_{a+1} \\[3mm] 0_{a+1}'
\end{pmatrix}\quad\text{ and }\quad
Y(v_1)=\begin{pmatrix}
    0_{a+1}' \\[3mm] J^-_{a+1}
\end{pmatrix}.
\]

\item\label{it.a1} $m=1$ and $V$ has socle factors $V(a+1)$, $V(a)$, $V(a+1)$.

\medskip
\noindent
$Z(z)=-(a+1)I_{a+2}$.

\medskip
\noindent
\[
X(v_0)=\begin{pmatrix}
    J^+_{a+1} \\[3mm] 0_{a+1}'
\end{pmatrix}\quad\text{ and }\quad
X(v_1)=\begin{pmatrix}
    0_{a+1}' \\[3mm] J^-_{a+1}
\end{pmatrix}.
\]
\medskip
\noindent
\[
Y(v_0)=\begin{pmatrix}
    0_{a+1} & I_{a+1}
\end{pmatrix}\quad\text{ and }\quad
Y(v_1)=\begin{pmatrix}
    -I_{a+1} & 0_{a+1}
\end{pmatrix}.
\]

\item\label{it.434} $m=3$ and $V$ has socle factors $V(4)$, $V(3)$, $V(4)$.

\medskip
\noindent
$Z(z)=6I_{5}$.

\medskip
\noindent
The matrices $X(v_0),X(v_1),X(v_2),X(v_3)\in V(m)$  are respectively:
{\footnotesize
\[
\begin{pmatrix} 0&6&0&0\\ 0&0&3&0\\ 0&0&0&1\\ 0&0&0&0\\ 0&0&0&0\end{pmatrix}, \quad
\begin{pmatrix} -6&0&0&0\\ 0&0&0&0\\ 0&0&3&0\\ 0&0&0&3\\ 0&0&0&0\end{pmatrix}, \quad
\begin{pmatrix} 0&0&0&0\\ -3&0&0&0\\ 0&-3&0&0\\ 0&0&0&0\\ 0&0&0&6\end{pmatrix}, \quad
\begin{pmatrix} 0&0&0&0\\ 0&0&0&0\\ -1&0&0&0\\ 0&-3&0&0\\ 0&0&-6&0\end{pmatrix}.
\]}

\medskip
\noindent
The matrices $Y(v_0),Y(v_1),Y(v_2),Y(v_3)\in V(m)$  are respectively:
{\footnotesize
\[
 \begin{pmatrix} 0&0&3&0&0\\ 0&0&0&2&0\\ 0&0&0&0&1\\ 0&0&0&0&0\end{pmatrix}, \quad
 \begin{pmatrix} 0&-6&0&0&0\\ 0&0&-3&0&0\\ 0&0&0&0&0\\ 0&0&0&0&3\end{pmatrix}, \quad
\begin{pmatrix} 3&0&0&0&0\\ 0&0&0&0&0\\ 0&0&-3&0&0\\ 0&0&0&-6&0\end{pmatrix}, \quad
\begin{pmatrix} 0&0&0&0&0\\ 1&0&0&0&0\\ 0&2&0&0&0\\ 0&0&3&0&0\end{pmatrix}.
\]}
\end{enumerate}

\end{proof}

\section{Non-existence of faithful uniserial \texorpdfstring{$\g$}{}-modules of length \texorpdfstring{$\geq 4$}{}}

\begin{prop}\label{no4} The are no faithful uniserial $\g$-modules of length 4.
\end{prop}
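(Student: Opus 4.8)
The plan is to assume, for contradiction, that $V$ is a faithful uniserial $\g$-module of length $4$ and to extract a purely numerical impossibility from the Heisenberg relations, bypassing the $6j$-symbol. Proceeding exactly as in the proof of Theorem~\ref{uno}, I fix a basis adapted to the unique composition series $0\subset V_1\subset V_2\subset V_3\subset V_4=V$ and obtain a block upper triangular matrix representation $R:\g\to\gl(d)$ whose diagonal blocks afford $V(a_1),\dots,V(a_4)$ and whose strictly upper blocks I write as $X_{ij}(h)$, for $h\in\h_n$ and $1\le i<j\le4$. The first step is to record the structural constraints. Since $R(\z)=[R(\h_n),R(\h_n)]$ and a commutator of strictly block upper triangular matrices has vanishing first superdiagonal, $R(\z)$ is supported on the blocks $(1,3)$, $(2,4)$, $(1,4)$; in particular the first superdiagonal of $R$ is carried entirely by $V(m)$, so by Theorem~\ref{thm.CS_2.4} each $X_{i,i+1}$ is nonzero on $V(m)$, whence $V(m)\hookrightarrow\Hom(V(a_{i+1}),V(a_i))$ and $a_i+a_{i+1}\equiv m\equiv1\pmod2$. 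Thus consecutive factors have opposite parity, $a_1$ and $a_4$ have opposite parity, and the $(1,4)$ block of $R(\z)$ vanishes. Writing $Z_{13}=R(z)_{13}$ and $Z_{24}=R(z)_{24}$, Schur's lemma gives $Z_{13}=z_{13}\,\mathrm{id}$ with $z_{13}\ne0$ iff $a_1=a_3$, and similarly $Z_{24}=z_{24}\,\mathrm{id}$ with $z_{24}\ne0$ iff $a_2=a_4$; faithfulness (every nonzero ideal of $\g$ contains $\z$) forces $Z_{13}\ne0$ or $Z_{24}\ne0$.

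Next I would exploit two relations, coming from $[R(v),R(z)]=0$ and $[R(v),R(w)]=c(v,w)R(z)$ for $v,w$ in the $V(m)$-section, where $[v,w]=c(v,w)z$ as in \S\ref{lieg}. Reading the $(1,4)$ entry of the first relation gives
\[
X_{12}(v)\,Z_{24}=Z_{13}\,X_{34}(v),\qquad v\in V(m).
\]
If $Z_{13}=0$, faithfulness forces $Z_{24}\ne0$, hence invertible, and this identity yields $X_{12}\equiv0$, contradicting Theorem~\ref{thm.CS_2.4}; symmetrically $Z_{24}=0$ forces $X_{34}\equiv0$. Hence both central scalars are nonzero, so $a_1=a_3=:a$ and $a_2=a_4=:b$, and the identity becomes $X_{34}=(z_{24}/z_{13})\,X_{12}$ as maps $V(b)\to V(a)$. (For $m\ge3$ one could already stop here: now $V_3$ is a faithful length-$3$ module with factors $V(a),V(b),V(a)$ and $V/V_1$ a faithful length-$3$ module with factors $V(b),V(a),V(b)$, so Theorem~\ref{uno} would force both $(a,b)$ and $(b,a)$ to lie in its admissible list, which is impossible since that list is not symmetric. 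The trace argument below, however, handles all $m\ge1$ uniformly.)

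Finally I would compare traces in the two second-superdiagonal relations. The $(1,3)$ and $(2,4)$ entries of $[R(v),R(w)]=c(v,w)R(z)$ read
\[
X_{12}(v)X_{23}(w)-X_{12}(w)X_{23}(v)=c(v,w)\,z_{13}\,\mathrm{id}_{V(a)},
\]
\[
X_{23}(v)X_{34}(w)-X_{23}(w)X_{34}(v)=c(v,w)\,z_{24}\,\mathrm{id}_{V(b)}.
\]
Substituting $X_{34}=(z_{24}/z_{13})X_{12}$ into the second relation, taking the trace over $V(b)$, and using cyclicity of the trace (which moves the computation onto $V(a)$ and interchanges the two factors at the cost of a sign) together with the first relation, I expect to obtain $c(v,w)\,z_{24}(b+1)=-\,c(v,w)\,z_{24}(a+1)$, i.e.\ $z_{24}(a+b+2)=0$ after choosing $v,w$ with $c(v,w)\ne0$. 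Since $a,b\ge0$ and $z_{24}\ne0$, this is absurd. The main obstacle is entirely bookkeeping: one must check that the displayed block equations contain no extra contributions from the second-superdiagonal blocks $X_{13},X_{24}$ (which vanish on $V(m)$ for parity reasons, as $\Hom(V(a),V(a))$ and $\Hom(V(b),V(b))$ contain no odd constituent), and that the trace transposition across the two relations produces the crucial opposite sign. Granting this, the numerical identity $z_{24}(a+b+2)=0$ is immediate and concludes the argument.
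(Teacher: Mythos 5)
Your proof is correct, and it takes a genuinely different route from the paper's. The paper argues by cases according to whether the length-3 submodule $W_1$ and the length-3 quotient $W_2$ of $V$ are faithful, then quotes the two classifications already available (Theorem~\ref{uno} for the faithful length-3 pieces, Theorem~\ref{thm.CS_Classification} for the non-faithful ones), and settles the residual $m=1$ configurations by explicit computation of the block $(1,4)$ of $[R(v_1),R(z)]$ using the matrices of \S\ref{sec.construction}. You instead squeeze everything out of the Heisenberg relations: reading $[R(v),R(z)]=0$ on block $(1,4)$ gives $X_{12}(v)Z_{24}=Z_{13}X_{34}(v)$, which together with Theorem~\ref{thm.CS_2.4} forces $z_{13}\ne0$ and $z_{24}\ne0$ (hence $a_1=a_3=a$, $a_2=a_4=b$) and $X_{34}=(z_{24}/z_{13})X_{12}$; then the blocks $(1,3)$ and $(2,4)$ of $[R(v),R(w)]=c(v,w)R(z)$, compared through traces and cyclicity, yield $c(v,w)\,z_{13}(a+b+2)=0$ with $c(v_0,v_m)=1$, which is absurd. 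I checked the bookkeeping you flagged as the main obstacle: because the diagonal blocks of $R(\h_n)$ and of $R(z)$ vanish, the second-superdiagonal blocks $X_{13},X_{24}$ never enter any of the three block equations (your parity argument also kills them, but it is not even needed), the first superdiagonal of $R(z)$ vanishes since $R(z)$ is a sum of commutators of strictly block upper triangular matrices, and the sign flip under transposition of the trace is exactly as you predicted. What your approach buys: it is uniform in $m$ (no split between $m=1$ and $m\ge3$), and it uses only Theorem~\ref{thm.CS_2.4}, Clebsch--Gordan and Schur, so the non-existence in length 4 becomes logically independent of Theorem~\ref{uno}, of Theorem~\ref{thm.CS_Classification}, and hence of the entire $6j$-symbol analysis; your optional shortcut for $m\ge3$ via the asymmetry of the admissible list in Theorem~\ref{uno} is also valid but is superseded by the trace argument. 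What the paper's approach buys: each case reduces to a one-line appeal to a known list, at the cost of a four-way case division and of inheriting the dependence on those earlier classifications.
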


\begin{proof} Suppose, if possible, that $V$ is a faithful uniserial $\g$-module of length 4,
with socle factors
$V(a),V(b),V(c),V(d)$. Then $V$ has a uniserial submodule $W_1$ with socle factors $V(a),V(b),V(c)$
and a uniserial
quotient $W_2=V/V(a)$ with socle factors $V(b),V(c),V(d)$. Four cases arise, depending on whether
$W_1,W_2$ are faithful
or not.
The case when $W_2$ is faithful, but $W_1$ is not, follows by duality (see \cite[Lemma 2.6]{CS})
from the case when
$W_1$ is faithful but $W_2$ is not.

\medskip

\noindent{\sc Case 1.} $W_1$ is faithful and $m\geq 3$.
By Theorem \ref{uno}, $(a,b,c)$ must be one of
$(0,m,0),(1,m+1,1),(1,m-1,1),(4,3,4)$, where in the latter case $m=3$. If $W_2$ is faithful,
a second application of Theorem \ref{uno},
this time to $(b,c,d)$, leaves no possible value for $b$. If $W_2$ is not faithful we appeal to the
classification of uniserial $\sl(2)\ltimes V(m)$-modules of length 3 given in
Theorem \ref{thm.CS_Classification}.
It forces $c=m$ to be in $\{0,1,4\}$, or $(b,c,d)$ to be an
arithmetic progression of step $\pm m$, which is impossible.

\medskip

\noindent{\sc Case 2.} $W_1$ is faithful and $m=1$.

Suppose first $W_2$ is also faithful. From Theorem \ref{uno} we deduce
that $(a,b,c,d)=(a,a+1,a,a+1)$ or $(a,b,c,d)=(b+1,b,b+1,b)$.

Let us deal with the case $(a,b,c,d)=(a,a+1,a,a+1)$ first.
Consider a basis $B=B_1\cup B_2\cup B_3\cup B_4$ of
$V$ adapted to the composition (socle) series.
We choose $B_i$ so that the matrix representation $R:\g\to\gl(d)$, $d=4a+6$, relative to $B$ has the form
\begin{equation}
\label{rept}
R(s+h)=\left(
  \begin{array}{cccc}
    R_a(s) & A(h) & D(h) & F(h) \\
    0 & R_{a+1}(s) & B(h) & E(h) \\
    0 & 0 & R_a(s) & C(h) \\
    0 & 0 & 0 & R_{a+1}(s) \\
  \end{array}
\right),\quad s\in\sl(2),\;h\in\h_n.
\end{equation}
Here $A,C,F:\h_n\to\Hom(V(a+1),V(a))$, $B:\h_n\to\Hom(V(a),V(a+1))$, $D:\h_n\to\Hom(V(a),V(a))$ and
$E:\h_n\to\Hom(V(a+1),V(a+1))$ are $\sl(2)$-homomorphisms given in matrix form and are unique up to scaling.
Therefore, since both $W_1$ and $W_2$ are faithful and uniserial (and $B$ is part of the definition of both modules),
it follows from Theorem \ref{uno} that $A,B,C,D,E$ are exactly as given in \S\ref{sec.construction}.
In particular $D(z)=(a+2)I_{a+1}$ and $E(z)=-(a+1)I_{a+2}$.
Now $R([v_1,z])=0$, whereas block (1,4) of $[R(v_1),R(z)]$ is $-(2a+3)
\begin{pmatrix}
    0_{a+1} & I_{a+1}
\end{pmatrix},$
a contradiction. This shows that this case is impossible for all $a$.

It is easy to see that the case $(a,b,c,d)=(b+1,b,b+1,b)$ is dual to the previous one
(see also \cite[Lemma 2.6]{CS}) and is therefore impossible.

Suppose next $W_2$ is not faithful. Appealing to
Theorem \ref{thm.CS_Classification} we deduce that
$(a,b,c,d)=(a,a+1,a,a-1)$ or $(a,b,c,d)=(b+1,b,b+1,b+2)$.

Arguing as above we find that block (1,4) of $[R(v_1),R(z)]$ is $-(a+2)\begin{pmatrix}
    J^+_{a} \\[3mm] 0_{a}'
\end{pmatrix}$ in the first case and
$(b+1)\begin{pmatrix}
    0_{b+2} & I_{b+2}
\end{pmatrix}$ in the second one. Both cases are impossible.

\medskip

\noindent{\sc Case 3.} Neither $W_1$ nor $W_2$ is faithful. Since $V$ is faithful, then $V(0)$
must enter $\Hom(V(d),V(a))$, whence $d=a$.
We appeal, again, to the classification of uniserial $\sl(2)\ltimes V(m)$-modules of length 3
given in Theorem \ref{thm.CS_Classification}. Since $d=a$,
we see that $(a,b,c)$ cannot be in an arithmetic progression
of step $\pm m$,
which forces $(a,b,c)$ to be $(0,m,c)$ or $(a,m,0)$.
In the latter case $(b,c,d)=(m,0,a)$, against
Theorem \ref{thm.CS_Classification}.
In the former, $(b,c,d)=(m,m,0)$ by
Theorem \ref{thm.CS_Classification}.
But this is only possible when $m\equiv 2m\mod 4$, which is not the case.
\end{proof}

\begin{theorem}\label{no5} The are no faithful uniserial $\g$-modules of length $\ell >3$.
\end{theorem}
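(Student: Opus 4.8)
The plan is to argue by strong induction on the length $\ell$, with base case $\ell=4$ supplied by Proposition \ref{no4}. So assume $\ell\ge5$ and that there is no faithful uniserial $\g$-module of length $\ell'$ for any $4\le\ell'<\ell$, and suppose for contradiction that $V$ is a faithful uniserial $\g$-module of length $\ell$. Let $0=\mathrm{soc}^0(V)\subset\cdots\subset\mathrm{soc}^\ell(V)=V$ be its unique composition series, write $V_i=\mathrm{soc}^i(V)/\mathrm{soc}^{i-1}(V)\simeq V(a_i)$ for its socle factors, and fix the block upper triangular matrix representation $R$ relative to a basis adapted to this series, as in \S\ref{sec.matrix_recognition}. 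The two facts driving the argument are that every \emph{contiguous} subquotient $\mathrm{soc}^j(V)/\mathrm{soc}^{i-1}(V)$ is again uniserial, and that, since every non-trivial ideal of $\g$ contains $\z$, a subquotient on which $z$ acts non-trivially is automatically faithful.

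First I would locate the action of $z$. The $(i,j)$ block of $R(z)$ is an $\sl(2)$-homomorphism $\z\to\Hom(V_j,V_i)$, so it vanishes unless $V_i\simeq V_j$, and whenever it is non-zero it survives in the subquotient $\mathrm{soc}^j(V)/\mathrm{soc}^{i-1}(V)$, which is then faithful uniserial of length $j-i+1$. A faithful uniserial $\g$-module has length $\ge3$ (for length $1$ the element $z\in[\g,\r]$ annihilates the irreducible module; for length $2$ the strictly upper blocks $R(v)$, $v\in V(m)$, all lie on the first superdiagonal and have vanishing pairwise products, so $R(z)=0$), so the inductive hypothesis leaves only $j-i+1\in\{3,\ell\}$, that is, either $j=i+2$ or $(i,j)=(1,\ell)$. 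As $R$ is faithful we have $R(z)\ne0$, so at least one such block occurs.

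Suppose some non-zero block of $R(z)$ lies on the second superdiagonal, in position $(k,k+2)$. Since $\ell\ge5$ there is a length-$4$ contiguous subquotient $W$ containing $V_k,V_{k+1},V_{k+2}$ (take the window $\{k-1,\dots,k+2\}$ if $k\ge2$, and $\{1,2,3,4\}$ if $k=1$). By Proposition \ref{no4} this $W$ is \emph{not} faithful, so $z$ annihilates $W$ and hence annihilates its subquotient $\mathrm{soc}^{k+2}(V)/\mathrm{soc}^{k-1}(V)$; but the non-zero $(k,k+2)$ block shows $z$ acts non-trivially on the latter, a contradiction. It remains to treat the case where the only non-zero block of $R(z)$ is the corner $(1,\ell)$, which forces $a_1=a_\ell$. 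Here $z$ annihilates both $\mathrm{soc}^{\ell-1}(V)$ and $V/\mathrm{soc}^1(V)$, so these are non-faithful uniserial modules of length $\ell-1\ge4$ and are classified by Theorem \ref{thm.CS_Classification}. For $\ell\ge6$ both windows have length $\ge5$, hence are arithmetic progressions of step $\pm m$; since they overlap in the factors $V_2,\dots,V_{\ell-1}$ they share a common step, so $V_1,\dots,V_\ell$ is a single progression and $a_\ell=a_1\pm(\ell-1)m\ne a_1$, a contradiction. For $\ell=5$ each window has length $4$, so is either such a progression or the exceptional sequence $V(0),V(m),V(m),V(0)$, and a short check on the shared factors $V_2,V_3,V_4$ shows the exceptional pattern in one window is incompatible with every admissible pattern in the other, while the purely arithmetic alternative again yields $a_1\ne a_5$.

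The main obstacle is the corner case $(i,j)=(1,\ell)$: here one cannot shrink to a smaller faithful subquotient and must instead play the two overlapping length-$(\ell-1)$ windows against the equality $a_1=a_\ell$, the genuinely delicate point being to exclude, when $\ell=5$, the exceptional four-step module $V(0),V(m),V(m),V(0)$ from one window while the adjacent window stays admissible. By contrast the second-superdiagonal case is immediate once one notes that $z$ acting trivially on a module forces it to act trivially on every subquotient, which is exactly what makes Proposition \ref{no4} propagate from length $4$ to all larger lengths.
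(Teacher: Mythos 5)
Your proof is correct, but it reaches the conclusion by a more roundabout route than the paper. The paper's proof is a one-step application of the inductive hypothesis: the submodule $\mathrm{soc}^{\ell-1}(V)$ and the quotient $V/\mathrm{soc}^1(V)$ are uniserial of length $\ell-1\ge 4$, hence non-faithful by induction, hence $\sl(2)\ltimes V(m)$-modules; Theorem \ref{thm.CS_Classification} then forces $a_1,\dots,a_\ell$ to be a single arithmetic progression of step $\pm m$, so all $a_i$ are distinct and \emph{every} block of $R(z)$ vanishes (diagonal blocks because $z\in[\g,\r]$ annihilates irreducibles, off-diagonal blocks by Clebsch--Gordan), contradicting faithfulness in one stroke. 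In particular, your preliminary localization of the non-zero blocks of $R(z)$ to positions $(i,i+2)$ or $(1,\ell)$, and the whole second-superdiagonal case that you dispose of via Proposition \ref{no4} applied to an interior length-$4$ window, are redundant: one never needs to know where $z$'s matrix could live, since the progression structure annihilates it everywhere at once; your corner-case contradiction ($a_1=a_\ell$ versus the progression) is a local instance of the same point. What your longer route buys is worth noting, though: first, the localization argument (non-trivial action of $z$ on a subquotient forces that subquotient to be faithful, since every non-trivial ideal of $\g$ contains $\z$) is a reusable principle; second, your explicit exclusion, when $\ell=5$, of the exceptional length-$4$ sequence $V(0),V(m),V(m),V(0)$ via the overlap of the two windows is a genuine detail that the paper's proof asserts implicitly ("forces $a_1,\dots,a_\ell$ to be an arithmetic progression") without spelling out, so on that point your write-up is the more careful of the two.
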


\begin{proof} By induction on $\ell$. The base case $\ell=4$ is proven in Proposition \ref{no4}. Suppose
$V$ is a uniserial $\g$-module of length $\ell>4$ and there are no faithful uniserial $\g$-modules of length $\ell-1$. Let
$V(a_1),\dots,V(a_\ell)$ be the socle factors of $V$.
Then $V$ has a submodule
$W_1$ with socle factors $V(a_1),\dots,V(a_{\ell-1})$ and a quotient module $V/V(a_1)$ with socle factors
$V(a_2),\dots,V(a_\ell)$.
By inductive hypothesis, these uniserial $\g$-modules are not faithful.
Therefore, they are uniserial $\sl(2)\ltimes V(m)$-modules.
The classification of uniserial $\sl(2)\ltimes V(m)$-modules
of length $\geq 4$
given in Theorem \ref{thm.CS_Classification} forces $a_1,\dots,a_\ell$ to be an arithmetic progression of step $\pm m$.
Thus, $V(0)$ does not enter $\Hom(V(a_j),V(a_i)))$ for any $i<j$, so $\z$ acts trivially on $V$.
\end{proof}


\end{document}